\documentclass{amsart}
\usepackage[all]{xy}
\usepackage{enumerate}
\usepackage{mathrsfs}
\usepackage{amssymb}
\usepackage{graphicx}
\usepackage{tikz-cd}

\theoremstyle{plain}
\newtheorem{theorem}{Theorem}[section]
\newtheorem{lemma}[theorem]{Lemma}

\newtheorem{proposition}[theorem]{Proposition}

\theoremstyle{definition}

\theoremstyle{remark}
\newtheorem{remark}[theorem]{Remark}

\newcommand\GL{\operatorname{GL}}

\newcommand\Hom{\operatorname{Hom}}

\newcommand\Sym{\operatorname{Sym}}

\newcommand\Ext{\operatorname{Ext}}

\newcommand\id{\operatorname{id}}
\newcommand\pr{\operatorname{pr}}

\newcommand\gr{\mathbf{gr}}

\newcommand\op{\mathrm{op}}
\newcommand\ab{\mathrm{ab}}
\newcommand\sgn{\mathrm{sgn}}

\newcommand\Z{\mathbb{Z}}
\newcommand\N{\mathbb{N}}
\newcommand\K{\Bbbk}

\newcommand\A{\mathbf{A}}

\newcommand\Lie{\mathcal{L}ie}

\newcommand\gpS{\mathfrak{S}}

\newcommand\Ob{\operatorname{Ob}}

\newcommand\calC{\mathcal{C}}

\newcommand\KMod{\K\textbf{-}\mathrm{Mod}}

\newcommand\catLie{\mathcal{C}at_{\mathcal{L}ie}}
\newcommand\catLieMod{\catLie\textbf{-}\mathrm{Mod}}
\newcommand\kgrop{\K\mathbf{gr}^{\operatorname{op}}}
\newcommand\kgropMod{\kgrop\textbf{-}\mathrm{Mod}}

\newcommand\catAss{\mathcal{C}at_{\mathcal{A}ss^{\mathrm{u}}}}

\newcommand\catLieC{\mathcal{C}at_{\mathcal{L}ie^{\mathrm{C}}}}

\newcommand\catLieCMod{\catLieC\textbf{-}\mathrm{Mod}}
\newcommand\AMod{\A\textbf{-}\mathrm{Mod}}
\newcommand\ub{\mathrm{uB}}

\newcommand\centre[1]{\begin{array}{c} #1 \end{array}}
\newcommand\centre{\input{[}}1]{\centre{\input{#1}}}

\newcommand{\adj}[4]{
\begin{tikzcd}[ampersand replacement=\&]
#1\arrow[r, shift left=1ex, "#3"{name=top}] \& #2\arrow[l, shift left=.5ex, "#4"{name=bottom}]
\arrow[phantom, from=top, to=bottom, , "\scriptscriptstyle\boldsymbol{\top}" rotate=180]
\end{tikzcd}
}

\title[Extensions between functors from Jacobi diagrams in handlebodies]{Extensions between functors from \\ Jacobi diagrams in handlebodies}
\author{Mai Katada}
\address{Graduate School of Mathematical Sciences, University of Tokyo, Tokyo 153-8914, Japan}
\email{mkatada@ms.u-tokyo.ac.jp}
\date{September 23, 2025}

\keywords{Ext-groups, Functor categories, Jacobi diagrams in handlebodies, Adjoint functors, Casimir Lie algebras, Casimir Hopf algebras}

\subjclass[2020]
{18A25, 
 18A40, 
 18G15, 
 57K16, 
}

\begin{document}

\begin{abstract}
The first Ext-groups between Schur functors in the category of modules over the $\K$-linearization $\kgrop$ of the opposite of the category of finitely generated free groups are computed for a filed $\K$ of characteristic $0$.
The $\K$-linear category $\A$ of Jacobi diagrams in handlebodies, which was introduced by Habiro and Massuyeau, has an $\N$-grading whose degree $0$ part identifies with the category $\kgrop$.
We compute the first Ext-groups in the category of $\A$-modules between simple $\A$-modules which are induced by Schur functors.
\end{abstract}
\maketitle

\section{Introduction}

Finitely generated free groups are important objects that appear in various fields of mathematics, such as topology, where they appear as fundamental groups.
There is a substantial body of literature on the functor category on the category $\gr$ of finitely generated free groups (or the opposite $\gr^{\op}$ of it) to the category of abelian groups (or the category of vector spaces) \cite{Djament--Vespa, Hartl--Pirashvili--Vespa, Powell--Vespa, Powellanalytic, Powellouter, PowellPassiMalcev, Kim--Vespa, Arone}.
Recently, the Ext-groups in the category of functors from $\gr$ to abelian groups have been studied \cite{Vespa, Arone, Kim--Vespa_Ext}, which are related to stable cohomology of automorphism groups of free groups with coefficients in reduced polynomial covariant functors \cite{Djament}.

We will focus on the functors to the category of vector spaces over a field $\K$ of characteristic $0$.
Powell \cite{Powellanalytic} proved that the category of analytic functors on $\gr^{\op}$ is equivalent to the category of modules over the $\K$-linear PROP $\catLie$ for Lie algebras.
Habiro and Massuyeau \cite{Habiro--Massuyeau} introduced the category $\A$ of Jacobi diagrams in handlebodies, which can be characterized as the $\K$-linear PROP for Casimir Hopf algebras.
The category $\A$ is $\N$-graded by the Casimir $2$-tensor, and the degree $0$ part of $\A$ identifies with the $\K$-linearization $\kgrop$ of $\gr^{\op}$.
Kim \cite{Kimtalk} has recently generalized the result of Powell to an equivalence of categories between the category of analytic functors on $\A$ and the category of modules over the $\K$-linear PROP $\catLieC$ for Casimir Lie algebras, which was introduced by Hinichi and Vaintrob \cite{Hinich--Vaintrob}.
Here, the category $\catLieC$ is $\N$-graded by the Casimir element, and the degree $0$ part of $\catLieC$ identifies with the category $\catLie$.
The author \cite{katadaAmod} independently obtained an adjunction between the category of analytic functors on $\A$ and the category of $\catLieC$-modules by using the category of extended Jacobi diagrams in handlebodies introduced in \cite{katada2}, which is equivalent to the equivalence of categories given by Kim.

The aim of this paper is to compute the first Ext-groups in the category $\AMod$ of $\A$-modules between simple $\A$-modules induced by Schur functors.
First, we compute the first Ext-groups in the category $\catLieCMod$ of $\catLieC$-modules. Via the equivalence of categories given by Kim, we obtain the first Ext-groups in $\AMod$. 
Then, we also give the direct computation of the first Ext-groups in the category $\AMod$. 

In the functor category on $\gr$, Vespa \cite{Vespa} computed the Ext-groups between the tensor power functors of the abelianization functor.
Let $\mathfrak{a}^{\#}:\kgrop\to \KMod$ denote the dual of the abelianization functor and $S^{\lambda}:\KMod\to \KMod$ the Schur functor corresponding to a partition $\lambda$.
The first Ext-groups in $\kgropMod$ between Schur functors $S^{\lambda}\circ \mathfrak{a}^{\#}$ are obtained from \cite[Corollary 18.23]{Powell--Vespa}, which generalizes a partial result extracted from \cite[Theorem 4.2]{Vespa}.

\begin{lemma}[{\cite[Corollary 18.23]{Powell--Vespa}}, see Lemma \ref{extgrop}]
 Let $\lambda,\mu$ be partitions and set $n=|\lambda|$ and $m=|\mu|$.
 Then we have
     \begin{gather*}
           \dim_{\K} \Ext^1_{\kgropMod}(
            S^{\lambda}\circ\mathfrak{a}^{\#}, S^{\mu}\circ\mathfrak{a}^{\#})
            ={\begin{cases}
                \sum_{\rho\vdash n-2}LR^{\lambda}_{\rho,1^2} LR^{\mu}_{\rho,1} & \text{if } m=n-1\\
                0 & \text{otherwise}
            \end{cases}},
    \end{gather*}
    where $LR^{\bullet}_{\bullet,\bullet}$ denotes the Littlewood--Richardson coefficient.
\end{lemma}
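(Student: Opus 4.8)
This is \cite[Corollary 18.23]{Powell--Vespa}; we outline the argument. The plan is to reduce to tensor powers of $\mathfrak{a}^{\#}$ by Schur--Weyl duality, identify the resulting $\Ext^{1}$ between tensor powers as an induced $\Sym$-bimodule, and extract the stated formula by Frobenius reciprocity. Write $V_{\lambda}$ for the simple $\Sym_{n}$-module attached to a partition $\lambda\vdash n$. In characteristic $0$, Schur--Weyl duality gives $(\mathfrak{a}^{\#})^{\otimes n}\cong\bigoplus_{\lambda\vdash n}(S^{\lambda}\circ\mathfrak{a}^{\#})\otimes V_{\lambda}$ in $\kgropMod$, hence $(\mathfrak{a}^{\#})^{\otimes n}\cong\bigoplus_{\lambda}(S^{\lambda}\circ\mathfrak{a}^{\#})^{\oplus\dim V_{\lambda}}$ as $\kgrop$-modules, so that $\Ext^{1}_{\kgropMod}\bigl((\mathfrak{a}^{\#})^{\otimes n},(\mathfrak{a}^{\#})^{\otimes m}\bigr)$ carries a $\Sym_{n}\times\Sym_{m}$-action whose $(V_{\lambda}\boxtimes V_{\mu})$-multiplicity space is $\Ext^{1}_{\kgropMod}(S^{\lambda}\circ\mathfrak{a}^{\#},S^{\mu}\circ\mathfrak{a}^{\#})$. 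It therefore suffices to describe this bimodule and apply $\Hom_{\Sym_{n}\times\Sym_{m}}(V_{\lambda}\boxtimes V_{\mu},-)$.

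The main step, which is where the real work lies, is to prove that $\Ext^{1}_{\kgropMod}\bigl((\mathfrak{a}^{\#})^{\otimes n},(\mathfrak{a}^{\#})^{\otimes m}\bigr)$ vanishes unless $m=n-1$, in which case it is isomorphic, as a $\Sym_{n}\times\Sym_{n-1}$-module, to $\mathrm{Ind}_{H}^{\Sym_{n}\times\Sym_{n-1}}(\mathbf{1}\boxtimes\sgn)$, where $H\cong\Sym_{n-2}\times\Sym_{2}$ is embedded with $\Sym_{n-2}$ acting simultaneously on $\{1,\dots,n-2\}\subset\{1,\dots,n\}$ and on $\{1,\dots,n-2\}\subset\{1,\dots,n-1\}$, and $\Sym_{2}$ acting on $\{n-1,n\}\subset\{1,\dots,n\}$, with the trivial representation on $\Sym_{n-2}$ and $\sgn$ on $\Sym_{2}$. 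This is the content of \cite[Theorem 4.2]{Vespa}, in its full $\Sym$-equivariant refinement \cite[Ch.~18]{Powell--Vespa}. The shape of the answer is explained by the fact that the sole source of first extensions between tensor powers is the nontrivial class in $\Ext^{1}_{\kgropMod}(\Lambda^{2}\mathfrak{a}^{\#},\mathfrak{a}^{\#})\cong\K$ reflecting the non-commutativity of free groups (dual to the nonsplit extension $F\mapsto\K[F]/I^{3}$ of $\K[F]/I^{2}$ by $\mathfrak{a}^{\otimes 2}$, whose alternating part is nonsplit), and every extension between tensor powers arises by inserting this class into one pair of the $n$ slots, which forces $m=n-1$, the passage to partitions of $n-2$, and the sign on the two distinguished slots. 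This can be made rigorous either by constructing the start of a projective resolution of $(\mathfrak{a}^{\#})^{\otimes n}$ in $\kgropMod$ from the Koszul resolution of the augmentation ideal of $\K[\Z^{k}]$ taken functorially in $F_{k}$ and reading off the first syzygy, or by passing through Powell's equivalence \cite{Powellanalytic} between analytic functors on $\gr^{\op}$ and $\catLieMod$ and invoking Koszulness of the Lie operad.

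Finally, when $m=n-1$, Frobenius reciprocity gives
$$\dim_{\K}\Ext^{1}_{\kgropMod}(S^{\lambda}\circ\mathfrak{a}^{\#},S^{\mu}\circ\mathfrak{a}^{\#})=\dim_{\K}\Hom_{H}\bigl((V_{\lambda}\boxtimes V_{\mu})|_{H},\,\mathbf{1}\boxtimes\sgn\bigr).$$
Restricting $V_{\lambda}$ along the Young subgroup $\Sym_{n-2}\times\Sym_{2}\subset\Sym_{n}$ and $V_{\mu}$ along $\Sym_{n-2}\subset\Sym_{n-1}$, identifying the two copies of $\Sym_{n-2}$ on $H$, using $\Hom_{\Sym_{n-2}}(V_{\rho}\otimes V_{\sigma},\mathbf{1})=\delta_{\rho\sigma}$, and selecting the $\sgn$-isotypic component on $\Sym_{2}$, the right-hand side equals
$$\sum_{\rho\vdash n-2}\bigl\langle V_{\rho}\boxtimes\sgn,\ \mathrm{Res}^{\Sym_{n}}_{\Sym_{n-2}\times\Sym_{2}}V_{\lambda}\bigr\rangle\cdot\bigl\langle V_{\rho},\ \mathrm{Res}^{\Sym_{n-1}}_{\Sym_{n-2}}V_{\mu}\bigr\rangle.$$
By the branching/Pieri rule the first factor is $LR^{\lambda}_{\rho,1^{2}}$ (equivalently, $\lambda/\rho$ is a vertical $2$-strip) and the second is $LR^{\mu}_{\rho,1}$ (equivalently, $\mu/\rho$ is a single box), giving the stated formula; the case $m\neq n-1$ is immediate from the vanishing above. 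The main obstacle is the middle step — pinning down $\Ext^{1}$ between tensor powers, i.e.\ the first syzygy of $(\mathfrak{a}^{\#})^{\otimes n}$ — while steps one and three are formal representation-theoretic bookkeeping.
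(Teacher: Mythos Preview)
Your proposal is correct and considerably more informative than the paper's own proof. The paper's argument for Lemma~\ref{extgrop} consists of two lines: pass from $\kgropMod$ to $\operatorname{Funct}(\gr,\KMod)$ by $\K$-linearization and duality (swapping $\lambda$ and $\mu$, $\mathfrak{a}^{\#}$ and $\mathfrak{a}$), then invoke \cite[Corollary~18.23]{Powell--Vespa} as a black box. You instead unpack the content of that citation: reduce to tensor powers via Schur--Weyl, identify $\Ext^{1}$ between tensor powers as the induced $\gpS_{n}\times\gpS_{n-1}$-module coming from the single nontrivial class $\Ext^{1}(\Lambda^{2}\mathfrak{a}^{\#},\mathfrak{a}^{\#})\cong\K$ (this is the substantive step, which you correctly attribute to \cite{Vespa} and \cite{Powell--Vespa}), and then extract the multiplicity of $V_{\lambda}\boxtimes V_{\mu}$ by Frobenius reciprocity and Pieri. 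Both routes rest on the same hard input---Vespa's determination of $\Ext^{1}$ between tensor powers---so neither is genuinely independent of the literature; the difference is that the paper cites the finished formula while you sketch how it is obtained. Your explanation of the ``shape'' of the answer (one sign-twisted pair of slots merging, forcing $m=n-1$ and the appearance of $1^{2}$) is a helpful heuristic the paper does not offer.
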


Schur functors $S^{\lambda}\circ \mathfrak{a}^{\#}$ are polynomial (and thus analytic), and correspond to Specht modules $S_{\lambda}$ via the equivalence of categories given by Powell.
Let $T:\kgropMod\to \AMod$ (resp. $T:\catLieMod\to \catLieCMod$) denote the functor induced by the projection $\A\to \kgrop$ (resp. $\catLieC\to \catLie$) to the degree $0$ part.
Since it is easier to compute the Ext-groups in $\catLieCMod$ than to compute the Ext-groups in $\AMod$, we compute the first Ext-groups between $T(S_{\lambda})$ in $\catLieCMod$.

\begin{theorem}[see Theorem \ref{extcatLieC}]
    Let $\lambda,\mu$ be partitions and set $n=|\lambda|$ and $m=|\mu|$.
    Then we have
     \begin{gather*}
    \begin{split}
        \Ext^1_{\catLieCMod}(T(S_{\lambda}), T(S_{\mu}))
        \cong
        \begin{cases}
        S_{\mu}\otimes_{\K\gpS_{m}}\catLie(n,m)\otimes_{\K\gpS_{n}}S_{\lambda} & \text{if }m=n-1,\\
           S_{\mu}\otimes_{\K\gpS_{m}}\ub(n,m)\otimes_{\K\gpS_{n}}S_{\lambda}  & \text{if }m=n+2,\\
            0 & \text{otherwise},
        \end{cases}
    \end{split}
    \end{gather*}
    where $\ub$ denotes the upward Brauer category, and $S_{\mu}$ is considered as a right $\K\gpS_{m}$-module.
    Moreover, we have
    \begin{gather*}
    \begin{split}
        \dim_{\K}\Ext^1_{\catLieCMod}(T(S_{\lambda}), T(S_{\mu}))
        =
        \begin{cases}
        \sum_{\rho\vdash n-2}LR^{\lambda}_{\rho,1^2}LR^{\mu}_{\rho,1} & \text{if }m=n-1,\\
         LR^{\mu}_{\lambda,2} & \text{if }m=n+2,\\
            0 & \text{otherwise}.
        \end{cases}
    \end{split}
    \end{gather*}
\end{theorem}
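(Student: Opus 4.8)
The plan is to compute $\Ext^1_{\catLieCMod}(T(S_\lambda), T(S_\mu))$ by exploiting the $\N$-grading on $\catLieC$ together with the standard machinery relating $\Ext$ in a functor/module category to a derived functor of $\Hom$ over the degree-$0$ part. Since $\catLieC$ is $\N$-graded with degree-$0$ part $\catLie$, any $\catLieC$-module has an associated graded, and the functor $T$ sends a $\catLie$-module to the $\catLieC$-module supported in degree $0$; equivalently $T$ is the restriction of scalars along $\catLieC \twoheadrightarrow \catLie$, and it has a left adjoint given by extension of scalars. First I would set up a projective resolution of $T(S_\lambda)$: take a projective resolution of $S_\lambda$ in $\catLieMod$ (for instance the bar-type resolution, or just the first two terms $P_1 \to P_0 \to S_\lambda$ built from representable functors $\catLie(n,-)\otimes_{\K\gpS_n} S_\lambda$), apply the exact extension-of-scalars functor $\catLieC\otimes_{\catLie}(-)$, and use that $\Ext^1$ is computed as the first cohomology of $\Hom_{\catLieCMod}(\text{resolution}, T(S_\mu))$. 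By adjunction $\Hom_{\catLieCMod}(\catLieC\otimes_{\catLie}P, T(S_\mu)) \cong \Hom_{\catLieMod}(P, \text{res}\,T(S_\mu))$, and since $T(S_\mu)$ is concentrated in degree $0$ its restriction back to $\catLie$ is just $S_\mu$ — so the degree-$0$ part of the complex recovers $\Ext^1_{\catLie}(S_\lambda, S_\mu)$, which vanishes because $\catLie$ is (essentially) semisimple in characteristic $0$. The new contributions come entirely from how the degree-$1$ and degree-$2$ morphisms of $\catLieC$ interact with the resolution.

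The key computational input is an explicit description of $\catLieC$ in low degrees relative to $\catLie$: the degree-$1$ piece is governed by insertion of a single Casimir element, which on morphism spaces is implemented by the upward Brauer category maps $\ub(n, n{+}1)$ and $\ub(n,n{-}1)$ (adding or contracting a strand), while a degree-$2$ contribution survives only through $\ub(n, n{+}2)$. Concretely I would identify $\catLieC(n,m) = \bigoplus_{k\ge 0} \catLie(n{+}?, m)\otimes(\text{Casimir insertions})$ and isolate, in the Koszul/bar complex for $T(S_\lambda)$, the subcomplex in the relevant internal degree. After using semisimplicity of $\catLie$ to collapse the $\catLie$-linear parts, the surviving $\Ext^1$ becomes the (co)homology of a short complex whose terms are $S_\mu\otimes_{\K\gpS_m}\catLie(n,m)\otimes_{\K\gpS_n}S_\lambda$ in the case $m = n-1$ (the contraction-type term, matching the degree-$1$ behaviour of $\catLieC$ and reproducing the $\kgrop$-answer of the quoted Lemma) and $S_\mu\otimes_{\K\gpS_m}\ub(n,m)\otimes_{\K\gpS_n}S_\lambda$ in the case $m=n+2$ (the pure Casimir-creation term), with everything else killed for degree reasons. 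The dimension formulas then follow by standard Littlewood–Richardson bookkeeping: $\catLie(n, n{-}1)$ as a $(\gpS_{n-1},\gpS_n)$-bimodule decomposes so that $S_\mu\otimes_{\K\gpS_m}\catLie(n,m)\otimes_{\K\gpS_n}S_\lambda$ has dimension $\sum_{\rho\vdash n-2} LR^\lambda_{\rho,1^2}LR^\mu_{\rho,1}$ (this is exactly the reduction underlying the quoted Lemma \ref{extgrop}), while $\ub(n, n{+}2)$ contributes a single added Brauer pair whose character pairing with $S_\lambda$ and $S_\mu$ gives $LR^\mu_{\lambda,2}$.

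I expect the main obstacle to be controlling which Casimir-insertion terms actually contribute to $\Ext^1$ versus which are exact: a priori the bar complex for $T(S_\lambda)$ over $\catLieC$ has contributions in all internal degrees, and one must show that the differential (which mixes the $\catLie$-structure with Casimir insertions) has homology concentrated in the two claimed bidegrees. This amounts to a vanishing statement — essentially that the "Casimir" part of $\catLieC$, beyond the first-order and a single second-order term, contributes only coboundaries — which I would establish either by an explicit spectral-sequence argument for the grading (the degree filtration on $\catLieC$ induces a filtration on $\Ext$, with $E_1$-page built from $\catLie$-$\Ext$ groups that vanish except in the relevant spots), or by directly exhibiting contracting homotopies using the Hopf-algebra/PROP relations (antipode, counit) that $\catLieC$ satisfies as the PROP for Casimir Lie algebras. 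Once the spectral sequence is shown to degenerate appropriately, the $E_\infty$-terms are precisely the two bimodule expressions above, and the isomorphism statement drops out; passing from the isomorphism to the dimension count is then the LR-coefficient computation, which I would do by decomposing the relevant Brauer-category bimodules into irreducibles and applying Frobenius reciprocity.
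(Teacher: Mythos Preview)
Your proposal contains a genuine error that derails the argument. You assert that $\Ext^1_{\catLieMod}(S_\lambda,S_\mu)$ vanishes ``because $\catLie$ is (essentially) semisimple in characteristic $0$.'' It is not: $\catLie(n,n-1)\neq 0$ (it contains the bracket), and indeed $\Ext^1_{\catLieMod}(S_\lambda,S_\mu)\cong S_\mu\otimes_{\K\gpS_{n-1}}\catLie(n,n-1)\otimes_{\K\gpS_n}S_\lambda$ is exactly the nonzero $m=n-1$ term you are trying to compute. You then misattribute this contribution to ``degree-$1$'' Casimir insertions and to $\ub(n,n{\pm}1)$, but $\ub(n,n{\pm}1)=0$ always (morphisms in $\ub$ only increase arity, and only by even amounts), and a single Casimir sends $n$ to $n+2$, not $n\pm 1$. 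So the bookkeeping of which graded piece produces which case is inverted: the $m=n-1$ term is a pure $\catLie$ (degree-$0$) phenomenon, and the $m=n+2$ term is the degree-$1$ Casimir phenomenon. There is also a technical slip: applying $\catLieC\otimes_{\catLie}(-)$ to a projective resolution of $S_\lambda$ gives a resolution of the induced module $\catLieC\otimes_{\catLie}S_\lambda$, which is not $T(S_\lambda)$ (the latter is restriction along the projection $\catLieC\twoheadrightarrow\catLie$, not extension along the inclusion).

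The paper's proof bypasses resolutions and spectral sequences entirely. It works with Yoneda extensions: if $K$ is an extension of $T(S_\lambda)$ by $T(S_\mu)$, then $K(p)=0$ for $p\notin\{n,m\}$, so the only possibly nontrivial structure map is the action of $\catLieC(n,m)$ sending $K(n)=S_\lambda$ to $K(m)=S_\mu$. Now every non-permutation morphism of $\catLieC$ factors through either a single bracket $\id\otimes[,]\otimes\id:p\to p-1$ or a single Casimir $\id_p\otimes c:p\to p+2$. Unless $m=n-1$ the first factors through $K(n-1)=0$; unless $m=n+2$ the second factors through $K(n+2)=0$. Hence for $m\notin\{n-1,n+2\}$ the extension splits. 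For $m=n-1$ only degree-$0$ morphisms survive, so $K=TU(K)$ and $\Ext^1_{\catLieCMod}\cong\Ext^1_{\catLieMod}$; for $m=n+2$ only $\catLieC(n,n+2)_1\cong\ub(n,n+2)$ survives, and compatibility with the symmetric-group actions gives the stated bimodule tensor product. The dimension in the $m=n+2$ case follows from $\ub(n,n+2)=\K\gpS_{n+2}\langle\id_n\otimes c\rangle$, which reduces the tensor product to $c_\mu\K\gpS_{n+2}(c_\lambda\otimes c_{(2)})$ and hence to $LR^{\mu}_{\lambda,2}$. This argument is elementary and requires no homological machinery beyond the definition of Yoneda $\Ext^1$; you should replace the resolution/spectral-sequence plan with this direct analysis.
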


Via the equivalence of categories given by Kim, we obtain the first Ext-groups in $\AMod$ between $T(S^{\lambda}\circ\mathfrak{a}^{\#})$.

\begin{theorem}
    [see Theorem \ref{Ext1Amod}]
 Let $\lambda,\mu$ be partitions and set $n=|\lambda|$ and $m=|\mu|$.
 Then we have
    \begin{gather*}
    \begin{split}
        \dim_{\K}\Ext^1_{\AMod}(T(S^{\lambda}\circ \mathfrak{a}^{\#}), T(S^{\mu}\circ \mathfrak{a}^{\#}))
        =
        \begin{cases}
           \sum_{\rho\vdash n-2}LR^{\lambda}_{\rho,1^2}LR^{\mu}_{\rho,1} & \text{if } m=n-1,\\
            LR^{\mu}_{\lambda,2} & \text{if } m=n+2,\\
            0 & \text{otherwise}.
        \end{cases}
    \end{split}
    \end{gather*}
\end{theorem}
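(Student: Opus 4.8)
The plan is to deduce this from Theorem~\ref{extcatLieC} by transporting the computation across Kim's equivalence of categories \cite{Kimtalk} (equivalently, the adjunction of \cite{katadaAmod}) between the category of analytic $\A$-modules and $\catLieCMod$; write $\Phi$ for this equivalence. The first point is that $T(S^{\lambda}\circ\mathfrak{a}^{\#})$ is analytic of bounded degree. Indeed, $\mathfrak{a}^{\#}$ is polynomial of degree $1$ on $\kgrop$, so $S^{\lambda}\circ\mathfrak{a}^{\#}$---a direct summand of $(\mathfrak{a}^{\#})^{\otimes n}$---is polynomial of degree $n=|\lambda|$; the grading-preserving projection $\A\to\kgrop$ is symmetric monoidal and bijective on objects, hence restriction along it commutes with cross-effects and therefore preserves polynomial degree. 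Thus $T(S^{\lambda}\circ\mathfrak{a}^{\#})$ is a polynomial, in particular analytic, $\A$-module of degree $n$, and likewise $T(S^{\mu}\circ\mathfrak{a}^{\#})$ has degree $m$. Since $\Phi$ is compatible with the $\N$-gradings and restricts in degree $0$ to Powell's equivalence \cite{Powellanalytic}, under which $S^{\lambda}\circ\mathfrak{a}^{\#}$ corresponds to the Specht module $S_{\lambda}$, one gets $\Phi\bigl(T(S^{\lambda}\circ\mathfrak{a}^{\#})\bigr)\cong T(S_{\lambda})$ in $\catLieCMod$, and similarly for $\mu$.

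Next I would reduce $\Ext^1$ in $\AMod$ to $\Ext^1$ among analytic modules. Fix $N\ge 0$ and let $\mathcal{P}ol_{\le N}(\A)\subseteq\AMod$ be the full subcategory of functors that are polynomial of degree $\le N$. The cross-effect functors $\cross_k$ are exact---each $\cross_k F$ is a natural direct summand of the (pointwise) exact functor $F\mapsto F(\bullet\oplus\cdots\oplus\bullet)$, see \cite{Hartl--Pirashvili--Vespa}---and $F$ is polynomial of degree $\le N$ if and only if $\cross_{N+1}F=0$. Hence $\mathcal{P}ol_{\le N}(\A)$ is closed under subobjects, quotients and extensions in $\AMod$, i.e.\ is a Serre subcategory; in particular it is a full subcategory of the analytic $\A$-modules that is closed under extensions. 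Consequently, for $A,B\in\mathcal{P}ol_{\le N}(\A)$ every short exact sequence $0\to B\to E\to A\to 0$ in $\AMod$ has $E\in\mathcal{P}ol_{\le N}(\A)$, so
\[
\Ext^1_{\AMod}(A,B)\;\cong\;\Ext^1_{\mathcal{P}ol_{\le N}(\A)}(A,B)\;\cong\;\Ext^1_{\catLieCMod}\bigl(\Phi(A),\Phi(B)\bigr),
\]
the second isomorphism because $\Phi$ restricts to an equivalence of abelian categories between the analytic $\A$-modules and $\catLieCMod$, and any equivalence of abelian categories preserves $\Ext^1$. Applying this with $A=T(S^{\lambda}\circ\mathfrak{a}^{\#})$, $B=T(S^{\mu}\circ\mathfrak{a}^{\#})$, $N=\max(n,m)$, together with the identification of the previous paragraph, yields
\[
\Ext^1_{\AMod}\bigl(T(S^{\lambda}\circ\mathfrak{a}^{\#}),\,T(S^{\mu}\circ\mathfrak{a}^{\#})\bigr)\;\cong\;\Ext^1_{\catLieCMod}\bigl(T(S_{\lambda}),\,T(S_{\mu})\bigr).
\]
The dimension of the right-hand side is computed by Theorem~\ref{extcatLieC}: it equals $\sum_{\rho\vdash n-2}LR^{\lambda}_{\rho,1^2}LR^{\mu}_{\rho,1}$ when $m=n-1$, equals $LR^{\mu}_{\lambda,2}$ when $m=n+2$, and vanishes otherwise, which is exactly the asserted formula.

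The main obstacle is the reduction step: one must be sure that the ambient functor category $\AMod$ contributes no extra, non-analytic extensions between the bounded-degree modules $T(S^{\lambda}\circ\mathfrak{a}^{\#})$ and $T(S^{\mu}\circ\mathfrak{a}^{\#})$. This is precisely what the Serre-subcategory property of $\mathcal{P}ol_{\le N}(\A)$ delivers, and it hinges on the exactness of cross-effects and on the \emph{finiteness} of the polynomial degrees involved; for arbitrary analytic modules no uniform degree bound exists and the comparison could genuinely fail. A second, more bookkeeping-type point that still needs care is the compatibility of $\Phi$ with the $\N$-gradings and with Powell's equivalence in degree $0$, which is what legitimizes replacing $T(S^{\lambda}\circ\mathfrak{a}^{\#})$ by $T(S_{\lambda})$ and hence invoking Theorem~\ref{extcatLieC}.
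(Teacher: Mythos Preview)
Your proposal is correct and follows essentially the same route as the paper. The paper reduces $\Ext^1_{\AMod}$ to $\Ext^1_{\AMod^{\omega}}$ by noting that $U$ is exact with $U\circ T=\id$, so $U$ of any extension is an extension of polynomial $\kgrop$-modules and hence polynomial by the thickness result of \cite{Hartl--Pirashvili--Vespa}; since an $\A$-module is analytic by definition precisely when $U$ of it is, the extension stays in $\AMod^{\omega}$. You instead redo this thickness argument inside $\AMod$ via cross-effects, which amounts to the same thing because the structure maps entering the cross-effect construction are all degree-$0$ morphisms of $\A$, i.e.\ morphisms of $\kgrop$. After that, both arguments invoke Kim's equivalence $\AMod^{\omega}\simeq\catLieCMod$ together with the compatibility of the two functors $T$ (stated in the paper after \eqref{TforcatLieC}) to identify $T(S^{\lambda}\circ\mathfrak{a}^{\#})$ with $T(S_{\lambda})$, and then apply Theorem~\ref{extcatLieC}.

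One small wording issue: in your displayed chain, the second isomorphism is justified by $\Phi$ being an equivalence $\AMod^{\omega}\simeq\catLieCMod$, but the left side is $\Ext^1_{\mathcal{P}ol_{\le N}(\A)}$, not $\Ext^1_{\AMod^{\omega}}$. This is harmless: since $\mathcal{P}ol_{\le N}(\A)\subseteq\AMod^{\omega}\subseteq\AMod$ and you have shown every extension in $\AMod$ already lies in $\mathcal{P}ol_{\le N}(\A)$, the Yoneda $\Ext^1$ in all three categories coincide; it would be cleaner to pass directly through $\Ext^1_{\AMod^{\omega}}$ as the paper does.
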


We also give the direct computation of the first Ext-groups in $\AMod$ between symmetric power functors in Theorem \ref{extsym} and between exterior power functors in Theorem \ref{extLambda}.

\subsection*{Acknowledgements}
The author would like to thank Professor Geoffrey Powell for suggesting the computation of the first Ext-groups in $\catLieCMod$, and for his valuable comments throughout drafts of the present paper.
She would also like to thank Professor Kazuo Habiro and Professor Christine Vespa for helpful comments.
This work was supported by JSPS KAKENHI Grant Number 24K16916.

\section{Preliminaries}
Here we briefly recall the category $\A$ of Jacobi diagrams in handlebodies, the category $\catLieC$ for Casimir Lie algebras, and an equivalence of categories between modules over them.
We refer the readers to \cite[Sections 1.3 and 4]{Habiro--Massuyeau} and \cite[Section 3.1]{katadaAmod} for more details on the category $\A$, and \cite[Section 3]{Hinich--Vaintrob} and \cite[Section 3.2]{katadaAmod} for more details on the category $\catLieC$.

\subsection{The category $\A$ of Jacobi diagrams in handlebodies}
Let $\K$ be a field of characteristic $0$.
The category $\A$ of Jacobi diagrams in handlebodies was introduced by Habiro and Massuyeau in \cite{Habiro--Massuyeau} as the codomain of a functor, which extends the Kontsevich invariant for bottom tangles.
The objects of $\A$ are non-negative integers, and 
the hom-space $\A(m,n)$ is the $\K$-vector space spanned by ``$(m,n)$-Jacobi diagrams in handlebodies" modulo the STU relation.
See \cite{Habiro--Massuyeau} for details.
Here, the STU relation implies the AS and IHX relations, which correspond to the axioms of Lie algebras. 

The category $\A$ is characterized by a \emph{Casimir Hopf algebra} in the following sense.

\begin{lemma}[{\cite[Theorem 5.11]{Habiro--Massuyeau}}]\label{presentationofA}
    The category $\A$ is a $\K$-linear PROP which is freely generated by a Casimir Hopf algebra $(H=1,\mu,\eta,\Delta,\varepsilon,S,\tilde{c})$.
\end{lemma}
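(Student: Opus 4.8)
The plan is to realize $\A$ as the codomain of a morphism of $\K$-linear PROPs out of the PROP $P$ freely generated by an abstract Casimir Hopf algebra, and then to prove that this morphism is an isomorphism; by the universal property of $P$ this is equivalent to the asserted freeness of $\A$.

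First I would exhibit the structure morphisms as explicit Jacobi diagrams. The operations $\mu\colon 2\to 1$, $\eta\colon 0\to 1$, $\Delta\colon 1\to 2$, $\varepsilon\colon 1\to 0$ and $S\colon 1\to 1$ all lie in the degree-$0$ part of $\A$, which identifies with $\kgrop$; under this identification they are the structure maps of the cocommutative Hopf algebra carried by the generating object of $\kgrop$, coming respectively from the fold and diagonal homomorphisms, the trivial homomorphism, and the inversion of the free group of rank one. The Casimir $2$-tensor $\tilde c\colon 0\to 2$ is the degree-$1$ Jacobi diagram consisting of a single strut joining the two output strands. One then checks diagrammatically that these six morphisms satisfy the axioms of a Casimir Hopf algebra: the Hopf-algebra relations hold already in $\kgrop$ (which is known to be the PROP freely generated by a cocommutative Hopf algebra); the symmetry of $\tilde c$ is the evident symmetry of a strut; the compatibilities of $\tilde c$ with $\Delta$, $\varepsilon$ and $S$ encode how a leg attaches to, is killed on, or is reflected along a handle core; and the ad-invariance of $\tilde c$ is exactly the STU relation applied to the trivalent vertex created when $\tilde c$ is composed with $\mu$. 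This produces a $\K$-linear symmetric monoidal functor $\Phi\colon P\to\A$, unique with these values on the generators.

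It remains to show $\Phi$ is full and faithful. Fullness is proved by induction on the number of trivalent vertices of an $(m,n)$-Jacobi diagram: the STU relation trades a trivalent vertex incident to a strand for diagrams with strictly fewer trivalent vertices, so one reduces to diagrams all of whose connected components are struts attached to the cores, and any such diagram is manifestly built from copies of $\tilde c$ together with the Hopf structure governing how the input cores branch, fuse, and reverse orientation. Faithfulness is the main obstacle: one must show that handlebody isotopy together with the STU relation impose no relations on $\A$ beyond the Casimir Hopf algebra axioms. I would do this by constructing an inverse $\Psi\colon\A\to P$ sending a Jacobi diagram to the normal form produced in the fullness argument, and checking that this normal form is well defined — the choices made in the reduction differ by AS, IHX and isotopy moves, each of which is absorbed by a relation already holding in $P$. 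A convenient way to organize this verification is to filter both $\A$ and $P$ by the Casimir degree, to note that in degree $0$ the statement is the known presentation of $\kgrop$, and then to induct on the degree, the inductive step amounting to the compatibility of the Casimir $2$-tensor with the Hopf operations.
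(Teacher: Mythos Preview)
The paper does not prove this statement at all: it is quoted verbatim as \cite[Theorem 5.11]{Habiro--Massuyeau} and used as a black box, so there is no proof in the paper to compare against. Your outline is a reasonable sketch of the strategy that Habiro and Massuyeau themselves follow, namely exhibiting the Casimir Hopf structure in $\A$, obtaining the comparison functor $\Phi$ by universality, and proving it is an isomorphism. The fullness step you describe is essentially the normal-form Lemma~\ref{decompositionofA} (their Lemma~5.16), so that part is in good shape.

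The genuine gap is faithfulness. Saying ``construct an inverse $\Psi$ by sending a diagram to its normal form and check well-definedness'' hides all of the content: one must show that every isotopy of Jacobi diagrams in handlebodies, together with every instance of STU, is a consequence of the Casimir Hopf axioms in $P$. Your proposed induction on Casimir degree reduces the degree-$0$ case to the known presentation of $\kgrop$, but the inductive step is not ``the compatibility of $\tilde c$ with the Hopf operations'': one must also verify that the nonuniqueness in the STU reduction (different choices of which trivalent vertex to resolve, and in which order) produces the same element of $P$, which requires IHX/AS-type diamond arguments and a careful treatment of how legs slide along handle cores. Similarly, the claim that ad-invariance of $\tilde c$ is ``exactly the STU relation'' is too quick; STU gives the Lie-bracket compatibility at a single trivalent vertex, whereas the ad-invariance axiom \eqref{Casimir2-tensorAd} is a global statement about conjugation by the Hopf algebra and needs the cocommutativity and antipode axioms in addition. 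None of this is wrong in spirit, but as written it is a plan rather than a proof, and for the purposes of the present paper a citation suffices.
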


Here, a Casimir Hopf algebra $(H=1,\mu,\eta,\Delta,\varepsilon,S,\tilde{c})$ in a $\K$-linear PROP is a cocommutative Hopf algebra $(H=1,\mu,\eta,\Delta,\varepsilon,S)$ equipped with a \emph{Casimir $2$-tensor}, which is a morphism $\tilde{c}:0\to 2$ satisfying 
\begin{gather}\label{Casimir2-tensorDelta}
    (\Delta\otimes \id_1)\tilde{c}=(\id_1\otimes \eta\otimes \id_1)\tilde{c}+\eta\otimes \tilde{c}, 
\end{gather}
\begin{gather}\label{Casimir2-tensorP}
    P_{1,1}\tilde{c}=\tilde{c}, 
\end{gather}
where $P_{1,1}$ denotes the symmetry of the $\K$-linear PROP,
and
\begin{gather}\label{Casimir2-tensorAd}
    (ad\otimes ad)(\id_1\otimes P_{1,1}\otimes \id_1)(\Delta\otimes \tilde{c})=\tilde{c}\varepsilon,
\end{gather}
where 
\begin{gather*}
    ad=\mu (\mu\otimes \id_1)(\id_{2}\otimes S)(\id_1\otimes P_{1,1})(\Delta\otimes \id_1).
\end{gather*}
We can easily check the following relations
\begin{gather}\label{Casimir2-tensorDelta'}
    (\id_1\otimes \Delta)\tilde{c}=(\id_1\otimes \eta\otimes \id_1)\tilde{c}+\tilde{c} \otimes \eta,
\end{gather}
and
\begin{gather}\label{epsilonc}
    (\varepsilon\otimes \id_1)\tilde{c}=0=(\id_1\otimes \varepsilon)\tilde{c}.
\end{gather}

The category $\A$ has an $\N$-grading, where the degree of the generating morphisms $\mu,\eta,\Delta,\varepsilon,S$ in $\A$ is $0$ and that of $\tilde{c}$ is $1$.
Let $\A_d(m,n)$ denote the degree $d$ part of the hom-space $\A(m,n)$.
The degree $0$ part $\A_0$ of the category $\A$ forms a subcategory of $\A$, which is freely generated by the cocommutative Hopf algebra.
As a $\K$-linear PROP, the category $\A_0$ is isomorphic to the $\K$-linearization $\kgrop$ of the opposite $\gr^{\op}$ of the category $\gr$ of finitely generated free groups.

In what follows, we recall a factorization of an element of $\A_d(m,n)$.  
Define $\mu^{[i]}:i\to 1$ inductively by 
$\mu^{[0]}=\eta,\; \mu^{[1]}=\id_1,\; \mu^{[i]}=\mu (\mu^{[i-1]}\otimes \id_1)$ for $i\ge 2$.
Let 
\begin{gather*}\label{mumulti}
\mu^{[p_1,\cdots,p_n]}=\mu^{[p_1]}\otimes\cdots\otimes\mu^{[p_n]}: p_1+\cdots +p_n\to n.
\end{gather*}
Similarly, we define $\Delta^{[i]}:1\to i$ by $\Delta^{[0]}=\varepsilon, \;\Delta^{[1]}=\id_1,\; \Delta^{[i]}=(\Delta^{[i-1]}\otimes \id_1)\Delta$ for $i\ge 2$, and let
\begin{gather*}
\Delta^{[q_1,\cdots,q_m]}=\Delta^{[q_1]}\otimes \cdots\otimes\Delta^{[q_m]} :m\to q_1+\cdots+q_m.
\end{gather*}
Define a homomorphism 
\begin{gather*}\label{permutation}
    P:\gpS_m\to \gr^{\op}(m,m),\quad \sigma\mapsto P_{\sigma}
\end{gather*}
by $P_{(i,i+1)}=\id_{i-1}\otimes P_{1,1}\otimes \id_{m-i-1}$ for $1\le i\le m-1$, and regard $P_{\sigma}$ as a morphism of the category $\A_0$.

\begin{lemma}[{\cite[Lemma 5.16]{Habiro--Massuyeau}}]\label{decompositionofA}
    Any element of $\A_d(m,n)$ is a linear combination of morphisms of the form
    \begin{gather*}
        \mu^{[p_1,\cdots,p_n]}P_{\sigma}(S^{e_1}\otimes\cdots\otimes S^{e_s}\otimes \id_{2d})(\Delta^{[q_1,\cdots,q_m]}\otimes \tilde{c}^{\otimes d}),
    \end{gather*}
    where $s,q_1,\cdots, q_m,p_1,\cdots,p_n\ge 0$, 
    $s=p_1+\cdots+p_n-2d=q_1+\cdots+q_m$, $\sigma\in \gpS_{s+2d}, e_1,\cdots,e_s\in \{0,1\}$.
\end{lemma}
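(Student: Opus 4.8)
The plan is to reduce the general statement to the degree-$0$ case — the normal form for the free cocommutative Hopf algebra $\A_0\cong\kgrop$ — by first isolating the $d$ copies of $\tilde c$ at the bottom of the diagram, and then clearing the Casimir strands of all comultiplications, counits and antipodes by means of the Casimir relations \eqref{Casimir2-tensorDelta}--\eqref{epsilonc}.

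First I would observe that, by Lemma \ref{presentationofA}, every element of $\A_d(m,n)$ is a $\K$-linear combination of composites (under $\otimes$ and $\circ$) of the generating morphisms $\mu,\eta,\Delta,\varepsilon,S,\tilde c$ and the symmetry $P_{1,1}$, in which $\tilde c$ occurs exactly $d$ times. Fix one such composite. Since $\tilde c\colon 0\to 2$ has no inputs, no generator can lie below it on its output strands; writing the composite as a vertical stack of elementary slices and repeatedly applying the interchange law (and absorbing a sorting permutation into the rest of the diagram), I can slide each copy of $\tilde c$ down to the bottom and collect the Casimir strands on the right. This factors the morphism as
\begin{gather*}
    G\circ(\id_m\otimes\tilde c^{\otimes d}),\qquad G\in\A_0(m+2d,n),
\end{gather*}
where $G$ is a composite of the degree-$0$ generators only, acting on the $m$ source strands together with the $2d$ strands produced by $\tilde c^{\otimes d}$.

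Next I would apply the degree-$0$ normal form to $G$. Since $\A_0\cong\kgrop$ is the PROP freely generated by a cocommutative Hopf algebra, $\A_0(m+2d,n)$ is spanned by the $n$-tuples of words in $m+2d$ letters encoding homomorphisms $F_n\to F_{m+2d}$; concretely, using associativity to merge multiplications into a single $\mu^{[\cdot]}$, coassociativity and cocommutativity to merge comultiplications into a single $\Delta^{[\cdot]}$, the bialgebra axiom to push every $\Delta$ below every $\mu$, the antipode anti-homomorphism relations to gather the antipodes into one layer, and $S^2=\id$ (valid since the Hopf algebra is cocommutative) to reduce the exponents to $\{0,1\}$, I can write $G$ as a linear combination of morphisms $\mu^{[p_1,\cdots,p_n]}P_{\sigma}(S^{e_1}\otimes\cdots\otimes S^{e_{m+2d}})\Delta^{[q_1,\cdots,q_{m+2d}]}$. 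This is exactly the $d=0$ case of the statement, and it is where the free-Hopf-algebra structure of $\A_0$ does the work.

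The main work, and the step I expect to be the main obstacle, is then to remove the comultiplications, counits and antipodes sitting on the $2d$ Casimir strands, so that the bottom layer becomes $\Delta^{[q_1,\cdots,q_m]}\otimes\tilde c^{\otimes d}$ and the antipode layer becomes $S^{e_1}\otimes\cdots\otimes S^{e_s}\otimes\id_{2d}$. For the comultiplications I would induct on the number of $\Delta$-nodes lying on Casimir strands: relations \eqref{Casimir2-tensorDelta} and \eqref{Casimir2-tensorDelta'} rewrite a comultiplication of a $\tilde c$-leg as a sum of terms in which $\tilde c$ survives intact and only stray units $\eta$ are created, strictly lowering the count; the units are then absorbed into the $\mu^{[\cdot]}$-layer by the unit axiom (equivalently $\mu^{[0]}=\eta$), while a counit on a $\tilde c$-leg annihilates its term by \eqref{epsilonc}. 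For the antipodes I would first derive, by applying $\mu(S\otimes\id_1)$ to two legs of \eqref{Casimir2-tensorDelta} and using \eqref{epsilonc}, the auxiliary relations $(S\otimes\id_1)\tilde c=-\tilde c=(\id_1\otimes S)\tilde c$ and hence $(S\otimes S)\tilde c=\tilde c$; these let me strip each antipode off a Casimir strand at the cost of a sign, using \eqref{Casimir2-tensorP} to transfer an antipode between the two legs of a single $\tilde c$. After each rewriting I re-collect the multiplications, permutations and remaining comultiplications into normal form and iterate; since the total number of $\Delta$-, $\varepsilon$- and $S$-nodes on Casimir strands strictly decreases, the process terminates, leaving precisely the asserted form, with $s=q_1+\cdots+q_m$ forced on the source side and $s+2d=p_1+\cdots+p_n$ on the target side. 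Establishing termination of this clearing procedure, and verifying that the needed auxiliary $\tilde c$-relations follow from the stated axioms, is the technical heart of the argument.
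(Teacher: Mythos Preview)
The paper does not supply its own proof of this lemma: it is quoted verbatim from \cite[Lemma~5.16]{Habiro--Massuyeau} and used as a black box. So there is nothing in the present paper to compare your argument against.

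That said, your outline is a sound reconstruction of how such a normal form is obtained, and it matches the structure of the Habiro--Massuyeau argument. A couple of small remarks. First, in your Step~2 the antipode layer should have $s'=q_1+\cdots+q_{m+2d}$ tensor factors, not $m+2d$; what you wrote is a harmless slip but would confuse a reader. Second, your derivation of $(S\otimes\id_1)\tilde c=-\tilde c$ from \eqref{Casimir2-tensorDelta} and \eqref{epsilonc} is correct and is exactly the missing ingredient needed to strip antipodes off Casimir legs; once you have it, Step~3 goes through as you describe. The termination argument (decreasing the count of $\Delta$-, $\varepsilon$-, $S$-nodes on Casimir strands) is the right bookkeeping device.
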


\subsection{The $\K$-linear PROP $\catLieC$ for Casimir Lie algebras}

The $\K$-linear PROP $\catLieC$ for Casimir Lie algebras was introduced by Hinich and Vaintrob in \cite{Hinich--Vaintrob}.
Here, a \emph{Casimir Lie algebra} $(L=1,[,],c)$ in a $\K$-linear PROP is a Lie algebra $(L=1,[,])$ equipped with a \emph{Casimir element}, which is a morphism $c:0\to 2$ satisfying
\begin{gather}\label{Casimirrelation}
    P_{1,1}c=c, \quad ([,]\otimes \id_1)(\id_1\otimes c)=(\id_1\otimes [,])(c\otimes \id_1).
\end{gather}

Let $\catLie$ denote the $\K$-linear PROP for Lie algebras, which is the $\K$-linear PROP freely generated by a Lie algebra $(L=1,[,])$ whose hom-space is
\begin{gather*}
      \catLie(m,n)=\bigoplus_{f: \{1,\dots,m\}\twoheadrightarrow \{1,\dots,n\}} \bigotimes_{i=1}^{n} \Lie(|f^{-1}(i)|),
\end{gather*}
where $\Lie$ denotes the Lie operad.
The category $\catLieC$ is the $\K$-linear PROP generated by the $\K$-linear PROP $\catLie$ and a symmetric element $c\in \catLieC(0,2)$ satisfying 
\begin{gather*}
    (\id_1\otimes f)(c\otimes \id_{n-1})=(f\tau \otimes \id_1)(\id_{n-1}\otimes c)
\end{gather*}
for $f\in \Lie(n)$, where $\tau=(012\cdots n)\in \gpS_{1+n}$ acts on the operad $\Lie$, which is considered as a cyclic operad (see \cite[Section 3.1.6]{Hinich--Vaintrob} for details).
Therefore, $\catLieC$ is the $\K$-linear PROP whose objects are generated by $L=1$ and whose morphisms are generated by $[,]:2\to 1$ and $c: 0\to 2$ with relations generated by the AS, IHX-relations and the relations \eqref{Casimirrelation}.

The category $\catLieC$ has an $\N$-grading given by the number of copies of the Casimir element $c$.
Let $\catLieC(m,n)_d$ denote the degree $d$ part of the hom-space $\catLieC(m,n)$.
Then the degree $0$ part $\catLieC(m,n)_0$ forms a subcategory $(\catLieC)_0$ of $\catLieC$, which is isomorphic to $\catLie$.

The category $\catLieC$ is related to the \emph{upward Brauer category} $\ub$, which is a $\K$-linear PROP whose hom-space $\ub(m,n)$ is spanned by partitionings of $\{1^+,\cdots,m^+\}\sqcup\{1^-,\cdots,n^-\}$ into $(m+n)/2$ unordered pairs in such a way that partitionings include no pairs of two positive elements \cite{Sam--Snowden, Kupers--Randal-Williams}.
Let $\mathbb{S}$ denote the $\K$-linear category whose objects are non-negative integers and whose hom-space is $\mathbb{S}(m,n)=\K\gpS_m$ if $m=n$ and $\mathbb{S}(m,n)=0$ otherwise.
Then we have a $\K$-linear category $\catLie\otimes_{\mathbb{S}}\ub$.
The assignment of the Casimir element $c$ to each unordered pair of two negative elements induces a $\K$-linear full functor
\begin{gather*}\label{catLieuB}
    \catLie\otimes_{\mathbb{S}}\ub\to \catLieC.
\end{gather*}
In other words, any element of $\catLieC(m,n)_d$ can be decomposed into a linear combination of morphisms of the form 
\begin{gather*}
    f(\id_{L^{\otimes m}}\otimes c^{\otimes d}),
\end{gather*}
where $f\in \catLie(m+2d,n)$.

\subsection{Equivalence of categories}
Let $\KMod$ denote the category of $\K$-vector spaces and $\K$-linear maps.
For a $\K$-linear PROP $\calC$, by a (left) $\calC$-module, we mean a $\K$-linear functor from $\calC$ to $\KMod$.
Let $\calC\textbf{-}\mathrm{Mod}$ denote the category of $\calC$-modules.

In \cite{Powellanalytic}, Powell constructed a $(\kgrop, \catLie)$-bimodule ${}_{\Delta}\catAss$ and proved the following.

\begin{proposition}[{\cite[Theorem 9.19]{Powellanalytic}}]\label{adjunctionPowell}
We have the hom-tensor adjunction
\begin{gather*}
 {}_{\Delta}\catAss\otimes_{\catLie} - : \adj{\catLieMod}{\kgropMod}{}{} : \Hom_{\kgropMod}({}_{\Delta}\catAss,-).
\end{gather*}
Moreover, the functor ${}_{\Delta}\catAss\otimes_{\catLie} - $ is an equivalence of categories
\begin{gather*}
    \catLieMod \simeq \kgropMod^{\omega},
\end{gather*}
where $\kgropMod^{\omega}$ denotes the full subcategory of $\kgropMod$ whose objects correspond to analytic functors on $\gr^{\op}$.
\end{proposition}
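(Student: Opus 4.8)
The plan is to establish the two assertions in turn: the hom-tensor adjunction is a formal consequence of the theory of bimodules over $\K$-linear categories, whereas the equivalence onto the analytic functors is proved by induction on polynomial degree, with a Poincar\'e--Birkhoff--Witt theorem and the cross-effect description of polynomial functors on $\gr^{\op}$ as the key inputs.

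For the adjunction, I would invoke the general fact that, for $\K$-linear categories $\calC,\calD$ and a $(\calC,\calD)$-bimodule $M$ --- a $\K$-linear functor $\calC\otimes\calD^{\op}\to\KMod$ --- the coend $M\otimes_{\calD}N=\int^{d\in\calD}M(-,d)\otimes_{\K}N(d)$ gives a functor $\calD\textbf{-}\mathrm{Mod}\to\calC\textbf{-}\mathrm{Mod}$ left adjoint to $X\mapsto\Hom_{\calC\textbf{-}\mathrm{Mod}}(M,X)$; the unit and counit are read off from the universal property of the coend, and the triangle identities are a routine check. Specialising to $\calC=\kgrop$, $\calD=\catLie$ and $M={}_{\Delta}\catAss$ yields the first assertion, with no special features of these categories entering.

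For the equivalence, write $F:={}_{\Delta}\catAss\otimes_{\catLie}-$. Both categories carry exhaustive filtrations: $\kgropMod$ by the full subcategories of functors that are polynomial of degree $\le d$ (those whose cross-effects above degree $d$ vanish), and $\catLieMod$ by the subcategories of modules generated in degrees $\le d$. I would first check that $F$ carries the $d$-th step of the source filtration into the $d$-th step of the target filtration; this reduces to evaluating $F$ on the projective generators $\catLie(k,-)$ and inspecting the bimodule ${}_{\Delta}\catAss$, whose morphisms decompose --- much as the morphisms of $\A$ in Lemma \ref{decompositionofA} --- into a part coming from the left $\kgrop$-action, built from comultiplications and permutations, composed with a part in the image of the right $\catLie$-action, so that the resulting left $\kgrop$-module is polynomial of degree $\le k$. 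The heart of the matter is then a Poincar\'e--Birkhoff--Witt computation, valid in characteristic $0$: the associated graded of the group algebra $\K[F_n]$ for the augmentation-ideal filtration is the free associative algebra on $n$ generators, which as a Hopf algebra is the symmetric algebra on the free Lie algebra. Translated to bimodules, this identifies the associated graded of ${}_{\Delta}\catAss$, in each bidegree, with the bimodule that realises the classical equivalence between homogeneous degree-$d$ polynomial functors on $\gr^{\op}$ and $\K\gpS_d$-modules --- equivalently, with the ``diagonal'' part of $\catLie$. Hence $\opegr F$ is an equivalence on the associated gradeds; since $F$ preserves colimits and both filtrations are exhaustive and stable under the relevant colimits, an induction on $d$ shows that $F$ is an equivalence between the $d$-th filtration steps for every $d$, and passing to the colimit over $d$ --- where analytic functors are precisely the colimits of their polynomial subfunctors --- gives $\catLieMod\simeq\kgropMod^{\omega}$.

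The main obstacle is exactly this last circle of ideas: describing the bimodule ${}_{\Delta}\catAss$ explicitly enough to compute its associated graded, and matching the outcome with the cross-effect description of polynomial functors on $\gr^{\op}$, in particular showing that the $d$-th cross-effect of $F(N)$ recovers $N$ in degree $d$. This is where characteristic $0$ is indispensable, both for splitting symmetric-group actions and for invoking PBW and Milnor--Moore, and it also relies on the Lazard--Magnus description of the lower central series of a free group. By comparison, the bookkeeping of the degree filtration on the coend and the colimit-compatibility of $F$ are routine.
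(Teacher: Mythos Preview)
The paper does not prove this proposition at all: it is stated as a citation of \cite[Theorem 9.19]{Powellanalytic} and no argument is given, since this is Powell's result being quoted as background. There is therefore no ``paper's own proof'' to compare your proposal against.

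That said, your sketch is a reasonable outline of the strategy underlying Powell's original proof. The adjunction is indeed purely formal, and the equivalence does proceed by a filtration/PBW argument matching polynomial degree on the $\kgrop$-side with arity truncation on the $\catLie$-side. One point to tighten: your description of the filtration on $\catLieMod$ as ``modules generated in degrees $\le d$'' is not quite the right one; what is used is rather the truncation by arity (modules supported in degrees $\le d$), which corresponds under the equivalence to polynomial degree $\le d$. Also, the identification of the associated graded is more delicate than ``the diagonal part of $\catLie$'': the actual content is that the bimodule ${}_{\Delta}\catAss$ is, after passing to the associated graded for the polynomial filtration, governed by the PBW isomorphism for the universal enveloping algebra of a free Lie algebra, and one must check that the right $\catLie$-action survives this passage compatibly. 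These are exactly the points handled carefully in Powell's paper, so if you were to flesh this out you would essentially be reproducing that argument.
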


In \cite{katadaAmod}, the author reconstructed the $(\kgrop, \catLie)$-bimodule ${}_{\Delta}\catAss$ by using the degree $0$ part $\A^{L}_0(L^{\otimes -},H^{\otimes -})$ of the hom-space $\A^{L}(L^{\otimes -},H^{\otimes -})$ of the category $\A^{L}$ of extended Jacobi diagrams in handlebodies. (See \cite[Sections 4.2, 4.3]{katada2} and \cite[Section 3.3]{katadaAmod} for details on the category $\A^{L}$.)
The hom-space $\A^L(L^{\otimes -},H^{\otimes -})$ has a canonical structure of an $(\A,\catLieC)$-bimodule, which yields a generalization of the hom-tensor adjunction in Proposition \ref{adjunctionPowell} as follows.

\begin{proposition}[{\cite[Theorem 4.4]{katadaAmod}}]\label{adjunctionA}
We have an $(\A,\catLieC)$-bimodule $\A^{L}(L^{\otimes -},H^{\otimes -})$, which induces an adjunction
\begin{gather*}
    \A^{L}(L^{\otimes -},H^{\otimes -})\otimes_{\catLieC} - : \adj{\catLieCMod}{\AMod}{}{} : \Hom_{\AMod}(\A^{L}(L^{\otimes -},H^{\otimes -}),-).
\end{gather*}
\end{proposition}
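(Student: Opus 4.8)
The plan is to make the $(\A,\catLieC)$-bimodule structure on $B:=\A^{L}(L^{\otimes -},H^{\otimes -})$ explicit, and then to obtain the adjunction from the standard hom--tensor formalism for bimodules over $\K$-linear categories, so that the only genuinely new ingredient is the bimodule itself.

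First I would recall that the $\K$-linear category $\A^{L}$ of extended Jacobi diagrams in handlebodies has among its objects all finite tensor words in the two generating objects $H$ and $L$, and that it receives canonical $\K$-linear functors $\A\to\A^{L}$ and $\catLieC\to\A^{L}$, identifying the object $n$ of $\A$ with $H^{\otimes n}$ and the object $m$ of $\catLieC$ with $L^{\otimes m}$; see \cite{katada2} and \cite{katadaAmod}. Setting $B(m,n)=\A^{L}(L^{\otimes m},H^{\otimes n})$, post-composition with the image of an $\A$-morphism makes $n\mapsto B(m,n)$ a left $\A$-module, and pre-composition with the image of a $\catLieC$-morphism makes $m\mapsto B(m,n)$ a left $\catLieC^{\op}$-module, that is, a right $\catLieC$-module. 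Associativity and unitality of composition in $\A^{L}$ then give at once functoriality of both actions and the fact that they commute, so that $B$ is an $(\A,\catLieC)$-bimodule. This is the one place where one actually works inside $\A^{L}$, and I expect it to be the main obstacle: it amounts to unwinding the construction of $\A^{L}$ given in \cite{katada2} and checking that the two structure functors land in $\A^{L}$ as claimed and that the induced actions on the hom-spaces are mutually compatible.

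Granting this, I would set up the two functors attached to $B$ in the usual way. For a left $\catLieC$-module $M$, the $\A$-module $B\otimes_{\catLieC}M$ is defined objectwise as the coend $\int^{k\in\catLieC}B(k,-)\otimes_{\K}M(k)$, equivalently as the coequalizer of
\begin{gather*}
\bigoplus_{k,l\in\Ob\catLieC} B(l,-)\otimes_{\K}\catLieC(k,l)\otimes_{\K}M(k)\\
\rightrightarrows\ \bigoplus_{k\in\Ob\catLieC} B(k,-)\otimes_{\K}M(k),
\end{gather*}
the two arrows being induced respectively by the right $\catLieC$-action on $B$ and by the module structure of $M$; these colimits exist since $\KMod$ is cocomplete, and the remaining left $\A$-action on $B$ descends to $B\otimes_{\catLieC}M$. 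Dually, for a left $\A$-module $N$ the $\catLieC$-module $\Hom_{\AMod}(B,N)$ is defined by $k\mapsto\Hom_{\AMod}\bigl(B(k,-),N\bigr)$, with $\catLieC$-action induced by the right $\catLieC$-action on $B$. Both assignments are visibly functorial in $M$, resp.\ $N$.

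Finally, the adjunction follows formally: for $M\in\catLieCMod$ and $N\in\AMod$ there are natural isomorphisms
\begin{align*}
\Hom_{\AMod}\bigl(B\otimes_{\catLieC}M,\ N\bigr)
&\cong \int_{k\in\catLieC}\Hom_{\KMod}\bigl(M(k),\ \Hom_{\AMod}(B(k,-),N)\bigr)\\
&\cong \Hom_{\catLieCMod}\bigl(M,\ \Hom_{\AMod}(B,N)\bigr),
\end{align*}
where the first isomorphism combines the fact that $\Hom_{\AMod}(-,N)$ carries colimits to limits (hence the defining coend to the corresponding end) with the tensor--hom adjunction in $\KMod$, and the second is the description of morphisms of $\catLieC$-modules as an end; naturality in $M$ and $N$ is immediate. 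As a consistency check I would verify that restricting all the data to the degree $0$ parts recovers Powell's adjunction of Proposition \ref{adjunctionPowell}, under the identification $\A^{L}_0(L^{\otimes -},H^{\otimes -})\cong{}_{\Delta}\catAss$.
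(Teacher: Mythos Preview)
The paper does not prove this proposition: it is quoted verbatim from \cite[Theorem 4.4]{katadaAmod} and used as a black box, so there is no argument here to compare against. Your outline is the expected one and is essentially how such a result is established in the cited reference: the bimodule structure on $\A^{L}(L^{\otimes -},H^{\otimes -})$ comes from pre- and post-composition in $\A^{L}$ via the structure functors $\A\to\A^{L}$ and $\catLieC\to\A^{L}$, and the adjunction is then the standard hom--tensor adjunction for a bimodule over $\K$-linear categories, expressed via coends and ends. The only substantive point, as you note, is the construction of those structure functors and the verification that they make $B$ a bimodule; this is carried out in \cite{katada2} and \cite{katadaAmod}, not in the present paper.
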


The inclusion functor
\begin{gather*}
   \kgrop\hookrightarrow \A
\end{gather*}
induces a forgetful functor 
\begin{gather*}\label{UforA}
   U: \AMod\to \kgropMod,
\end{gather*}
which is exact and has a left adjoint.
We define a full subcategory $\AMod^{\omega}\subset \AMod$ of \emph{analytic $\A$-modules}, where an $\A$-module $M$ is analytic if $U(M)\in \kgropMod^{\omega}$.
Then the forgetful functor $U$ restricts to analytic functors, that is, we have a $\K$-linear functor
\begin{gather}\label{UforAomega}
    U:\AMod^{\omega}\to \kgropMod^{\omega}.
\end{gather}
The adjunction in Proposition \ref{adjunctionA} restricts to analytic functors.

\begin{proposition}[{\cite[Theorem 4.9]{katadaAmod}}]\label{adjunctionAomega}
The $\A$-module $\A^{L}(L^{\otimes -}, H^{\otimes -})$ is analytic, and
the adjunction in Theorem \ref{adjunctionA} restricts to an adjunction 
\begin{gather*}
    \A^{L}(L^{\otimes -}, H^{\otimes -})\otimes_{\catLieC} - : \adj{\catLieCMod}{\AMod^{\omega}}{}{} : \Hom_{\AMod^{\omega}}(\A^{L}(L^{\otimes -}, H^{\otimes -}),-).
\end{gather*}    
\end{proposition}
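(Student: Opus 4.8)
The statement has two parts: that the $\A$-module $\A^{L}(L^{\otimes -},H^{\otimes -})$ is analytic, and that the adjunction of Proposition~\ref{adjunctionA} then restricts to $\AMod^{\omega}$. The second part is formal once the first is known, so the plan is to concentrate on analyticity. Write $F=\A^{L}(L^{\otimes -},H^{\otimes -})$. First I would reduce to the representable case: every $\catLieC$-module is a colimit of representables $\catLieC(m,-)$, the functor $F\otimes_{\catLieC}-$ preserves colimits, and $F\otimes_{\catLieC}\catLieC(m,-)\cong\A^{L}(L^{\otimes m},H^{\otimes -})$ by the co-Yoneda lemma. Colimits of $\A$- and of $\kgrop$-modules are computed pointwise, so the forgetful functor $U$ preserves colimits; since $\kgropMod^{\omega}$ is closed under colimits in $\kgropMod$, the subcategory $\AMod^{\omega}$ is closed under colimits in $\AMod$, and it therefore suffices to show that $\A^{L}(L^{\otimes m},H^{\otimes -})\in\AMod^{\omega}$ for every $m\ge 0$, i.e. that the $\kgrop$-module $U(\A^{L}(L^{\otimes m},H^{\otimes -}))$ is analytic.

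For this I would use the $\N$-grading of $\A^{L}$. Since the morphisms of $\A_{0}=\kgrop$ have degree $0$ and degrees add under composition, $\A^{L}(L^{\otimes m},H^{\otimes n})=\bigoplus_{d\ge 0}\A^{L}(L^{\otimes m},H^{\otimes n})_{d}$ is a decomposition of $\kgrop$-modules, natural in $n$. By the analogue for $\A^{L}$ of the factorization of Lemma~\ref{decompositionofA} — pulling all $d$ Casimir elements to the right, mirroring the decomposition $f(\id_{L^{\otimes m}}\otimes c^{\otimes d})$ of degree-$d$ morphisms in $\catLieC$ recalled above — one obtains a surjection of $\kgrop$-modules
\[
\A^{L}_{0}(L^{\otimes m+2d},H^{\otimes -})\twoheadrightarrow\A^{L}(L^{\otimes m},H^{\otimes -})_{d},\qquad g\longmapsto g\circ(\id_{L^{\otimes m}}\otimes c^{\otimes d}).
\]
Now $\A^{L}_{0}(L^{\otimes m+2d},H^{\otimes -})\cong{}_{\Delta}\catAss(m+2d,-)\cong{}_{\Delta}\catAss\otimes_{\catLie}\catLie(m+2d,-)$ is the image of a representable $\catLie$-module under the equivalence $\catLieMod\simeq\kgropMod^{\omega}$ of Proposition~\ref{adjunctionPowell}, hence analytic. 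As analytic functors on $\gr^{\op}$ are closed under quotients and under arbitrary direct sums, each summand $\A^{L}(L^{\otimes m},H^{\otimes n})_{d}$ is analytic, and hence so is their direct sum over $d$; thus $U(\A^{L}(L^{\otimes m},H^{\otimes -}))$ is analytic, proving the first part.

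It then remains to record that the adjunction restricts. The left adjoint $F\otimes_{\catLieC}-$ now corestricts to a functor $\catLieCMod\to\AMod^{\omega}$, and the right adjoint $\Hom_{\AMod}(F,-)$ is simply restricted along the inclusion $\AMod^{\omega}\hookrightarrow\AMod$. For $M\in\catLieCMod$ and $N\in\AMod^{\omega}$,
\[
\Hom_{\AMod^{\omega}}(F\otimes_{\catLieC}M,N)=\Hom_{\AMod}(F\otimes_{\catLieC}M,N)=\Hom_{\catLieCMod}(M,\Hom_{\AMod}(F,N)),
\]
the first equality because $\AMod^{\omega}$ is a full subcategory of $\AMod$ and the second by Proposition~\ref{adjunctionA}; this is precisely the asserted adjunction.

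The main obstacle is the analyticity of $\A^{L}(L^{\otimes m},H^{\otimes -})$, and within it the factorization statement for $\A^{L}$: one must check that extracting the $d$ Casimir elements from an arbitrary degree-$d$ extended Jacobi diagram is compatible with the STU and IHX relations and yields a well-defined, $\kgrop$-linear surjection from the degree-$0$ part with $2d$ extra $L$-legs. Once that structural input is in place, everything else is formal, resting only on the closure properties of the class of analytic functors and on Powell's equivalence of categories.
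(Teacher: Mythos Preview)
The paper does not prove this proposition: it is quoted verbatim from \cite[Theorem 4.9]{katadaAmod} and no argument is given here, so there is no ``paper's own proof'' to compare against. Your outline is a sensible reconstruction of how such a result is typically established, and the ingredients you invoke (the identification $\A^{L}_{0}(L^{\otimes -},H^{\otimes -})\cong{}_{\Delta}\catAss$, the $\N$-grading, closure of analytic functors under colimits and quotients, and the formal restriction of the adjunction along a full subcategory) are all available in or cited by the present paper.

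The one substantive point you flag yourself is exactly the place where your argument is incomplete as written: the surjection $\A^{L}_{0}(L^{\otimes m+2d},H^{\otimes -})\twoheadrightarrow\A^{L}(L^{\otimes m},H^{\otimes -})_{d}$ requires a factorization lemma for $\A^{L}$ analogous to Lemma~\ref{decompositionofA}, and that lemma is not stated in this paper (it lives in \cite{katada2} or \cite{katadaAmod}). Without it, the reduction to degree~$0$ is an assertion rather than a proof. Everything else in your write-up is routine; in particular your final paragraph deriving the restricted adjunction from fullness of $\AMod^{\omega}\hookrightarrow\AMod$ is correct and is the standard way to conclude.
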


Kim \cite{Kimtalk} has independently obtained an adjunction which is equivalent to the adjunction in Proposition \ref{adjunctionAomega}, and furthermore, he has proved that the adjunction gives an equivalence of categories. (See \cite[Remark 4.10]{katadaAmod}.)

\begin{proposition}[Kim \cite{Kimtalk}]\label{equivalenceKim}
The functor $ \A^{L}(L^{\otimes -}, H^{\otimes -})\otimes_{\catLieC} - $ is an equivalence of categories
\begin{gather*}
        \catLieCMod\simeq \AMod^{\omega}.
\end{gather*}
\end{proposition}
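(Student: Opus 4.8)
The plan is to promote the adjunction of Proposition~\ref{adjunctionAomega} to an equivalence by proving that its unit and counit are natural isomorphisms, the main work being a Yoneda-type identification of a morphism space in $\AMod^{\omega}$ that is reduced, by dévissage over the $\N$-gradings of $\A$ and $\catLieC$, to Powell's equivalence (Proposition~\ref{adjunctionPowell}). Write $F=\A^{L}(L^{\otimes-},H^{\otimes-})\otimes_{\catLieC}-$ and $R=\Hom_{\AMod^{\omega}}(\A^{L}(L^{\otimes-},H^{\otimes-}),-)$ for the two adjoint functors. As a tensor functor $F$ is right exact and preserves all colimits, and it lands in $\AMod^{\omega}$ because $\A^{L}(L^{\otimes-},H^{\otimes-})$ is analytic (Proposition~\ref{adjunctionAomega}) and analytic $\A$-modules are closed under colimits; so the only issue is showing the unit and counit are isomorphisms.

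For the unit, I would reduce to the representable $\catLieC$-modules. Since every $\catLieC$-module is a colimit of representables $\catLieC(k,-)$, since $F$ is cocontinuous, and since $F(\catLieC(k,-))\cong\A^{L}(L^{\otimes k},H^{\otimes-})$ and $\Hom_{\catLieCMod}(\catLieC(k,-),M)\cong M(k)$, full faithfulness of $F$ comes down to proving that for all $k,n$ the precomposition map
\begin{gather*}
\catLieC(k,n)\;\longrightarrow\;\Hom_{\AMod^{\omega}}\!\bigl(\A^{L}(L^{\otimes n},H^{\otimes-}),\,\A^{L}(L^{\otimes k},H^{\otimes-})\bigr)
\end{gather*}
is an isomorphism (one must be slightly careful here because $R$ need not preserve colimits, so I would argue directly with $\Hom$ out of colimits of representables and use that the source objects $\A^{L}(L^{\otimes n},H^{\otimes-})$ are well behaved for the polynomial-degree filtration of $\AMod^{\omega}$). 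To prove the displayed isomorphism I would filter both sides by degree: on the left, the decomposition of $\catLieC(m,n)_{d}$ as $f(\id_{L^{\otimes m}}\otimes c^{\otimes d})$ with $f\in\catLie(m+2d,n)$ recalled in Section~2 expresses the degree-$d$ layer in terms of $\catLie$ and the copies of the Casimir element modulo the symmetries permuting them, while on the right Lemma~\ref{decompositionofA} plays the analogous role for $\A$, pulling the $d$ copies of $\tilde c$ to the source and leaving a degree-$0$ diagram. Both descriptions are built, degree by degree, from the degree-$0$ bimodule $\A^{L}_{0}(L^{\otimes-},H^{\otimes-})\cong{}_{\Delta}\catAss$ together with the actions of $c$ and $\tilde c$, the matching of the Casimir relations \eqref{Casimirrelation} with the STU-relations governing $\tilde c$ being what makes the two sides agree; hence on the associated graded the displayed map reduces to the degree-$0$ statement, which is precisely Powell's equivalence applied to the representable $\catLie$-modules $\catLie(k+2d,-)$ (tensored with the $\gpS$-bimodule recording how the Casimirs are permuted). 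An induction on $d$, or on polynomial degree, then yields the statement on the filtered objects, and a colimit argument passes to all of $\AMod^{\omega}$.

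Once the unit is an isomorphism, so that $F$ is fully faithful, it remains to prove essential surjectivity, equivalently that the counit $\varepsilon_{N}\colon FR(N)\to N$ is an isomorphism for every analytic $\A$-module $N$. I would again use dévissage over the polynomial-degree filtration $N=\operatorname{colim}_{d}N_{\le d}$: since $F$ is cocontinuous it suffices to place each polynomial piece $N_{\le d}$ in the essential image of $F$, and for such $N_{\le d}$ the underlying $\kgrop$-module is polynomial of degree $\le d$, hence corresponds under Powell's equivalence to a $\catLie$-module $M_{0}$ concentrated in degrees $\le d$; the extra datum of the $\tilde c$-action on $N_{\le d}$ then upgrades $M_{0}$ to a $\catLieC$-module $M$ with $F(M)\cong N_{\le d}$ by the morphism-space computation above. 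Alternatively one can finish formally: applying $R$ to $\varepsilon_{N}$ and using a triangle identity together with the fact that the unit is an isomorphism shows $R(\varepsilon_{N})$ is an isomorphism, so it is enough that $R$ be conservative, which reduces via the forgetful functor $U\colon\AMod^{\omega}\to\kgropMod^{\omega}$ of \eqref{UforAomega} and Powell's equivalence to the statement that the family $\{\A^{L}(L^{\otimes n},H^{\otimes-})\}_{n\ge 0}$ detects nonzero analytic $\A$-modules.

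The step I expect to be the main obstacle is the morphism-space identification in the displayed formula: one must show that \emph{no} $\A$-module map $\A^{L}(L^{\otimes n},H^{\otimes-})\to\A^{L}(L^{\otimes k},H^{\otimes-})$ arises beyond those induced by $\catLieC$, which requires controlling how infinitely many internal Casimir tensors may occur and genuinely uses analyticity (over all of $\AMod$ there would be too many maps). Equivalently, the delicate point is the convergence of the $\N$-grading and polynomial-degree filtrations on the $\Hom$-side, and the exactness needed to pass from the associated graded, where everything is governed by Powell's equivalence and finite-dimensional representation theory of the symmetric groups, back to the filtered statement.
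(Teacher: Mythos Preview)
The paper does not contain a proof of this proposition. It is stated as a result of Kim, cited from \cite{Kimtalk}, and the surrounding text only says that Kim ``has proved that the adjunction gives an equivalence of categories'' with a pointer to \cite[Remark 4.10]{katadaAmod}. There is therefore no argument in the paper to compare your proposal against.

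That said, your sketch is a plausible outline for how such a proof might go, and you correctly identify the crux: the morphism-space identification
\[
\catLieC(k,n)\;\xrightarrow{\ \cong\ }\;\Hom_{\AMod^{\omega}}\bigl(\A^{L}(L^{\otimes n},H^{\otimes-}),\A^{L}(L^{\otimes k},H^{\otimes-})\bigr),
\]
together with the convergence and exactness issues needed to pass from associated graded back to the filtered objects. Several steps in your outline are genuinely nontrivial and would need substantial work to make rigorous: the claim that the degree-by-degree reduction to Powell's equivalence goes through requires knowing that the surjection $\catLie\otimes_{\mathbb{S}}\ub\to\catLieC$ and the corresponding structure on the $\A$-side have compatible kernels, and your alternative endgame via conservativity of $R$ presupposes exactly the detection statement you would need to establish. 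But none of this can be assessed against the paper, since the paper simply imports the result.
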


The inclusion functor
\begin{gather*}
   \catLie\hookrightarrow \catLieC
\end{gather*}
induces a forgetful functor
\begin{gather}\label{UforcatLieC}
    U:\catLieCMod\to \catLieMod,
\end{gather}
which is exact and has a left adjoint.
The forgetful functor $U$ defined in \eqref{UforcatLieC} corresponds to the forgetful functor $U$ defined in \eqref{UforAomega} via the equivalences of categories in Propositions \ref{adjunctionPowell} and \ref{equivalenceKim}.

By the $\N$-grading of the $\K$-linear category $\A$, we have a projection functor
\begin{gather*}
    \A\twoheadrightarrow \A_0\cong \kgrop,
\end{gather*}
which induces a $\K$-linear functor
\begin{gather*}\label{TforA}
    T: \kgropMod\to \AMod,
\end{gather*}
which is fully-faithful and exact.
Since we have $U\circ T=\id_{\kgropMod}$, the functor $T$ restricts to analytic functors, that is, we have a $\K$-linear functor
\begin{gather}\label{TforAomega}
    T: \kgropMod^{\omega}\to \AMod^{\omega}.
\end{gather}
Similarly, the projection to the degree $0$ part induces a $\K$-linear functor
\begin{gather}\label{TforcatLieC}
     T: \catLieMod\to \catLieCMod.
\end{gather}
The functor $T$ defined in \eqref{TforcatLieC} corresponds to the functor $T$ defined in \eqref{TforAomega} via the equivalences of categories in Proposition  \ref{adjunctionPowell} and \ref{equivalenceKim}.

\subsection{Specht modules and Schur functors}
Here we recall Specht modules and Schur functors.
We refer the reader to \cite{Fulton--Harris} on representation theory of symmetric groups.

For a partition $\lambda$ of a non-negative integer, let $l(\lambda)$ denote the \emph{length} of $\lambda$ and $|\lambda|$ the \emph{size} of $\lambda$. We write $\lambda\vdash |\lambda|$.
Let $c_{\lambda}\in \K\gpS_{|\lambda|}$ denote the \emph{Young symmetrizer} corresponding to $\lambda$, that is, 
$$c_{\lambda}=\left(\sum_{\sigma \in R_{\lambda}}\sigma \right)\left(\sum_{\tau\in C_{\lambda}}\sgn(\tau)\tau\right),$$
where $R_{\lambda}$ is the row stabilizer and $C_{\lambda}$ is the column stabilizer of the canonical tableau of $\lambda$. (See \cite{Fulton--Harris} for details.) 
The \emph{Specht module} $S_{\lambda}$ corresponding to $\lambda\vdash d$ is defined by
\begin{gather*}
    S_{\lambda}=\K\gpS_{d}\cdot c_{\lambda}.
\end{gather*}
It is well known that $\{S_{\lambda}\mid \lambda\vdash d\}$ is the set of isomorphism classes of irreducible representations of $\gpS_{d}$.
Since $\catLie(m,n)=0$ for $m<n$, the set of isomorphism classes of simple $\catLie$-modules is $\{S_{\lambda}\mid d\ge 0, \lambda\vdash d\}$.

The \emph{Schur functor}
$$S^{\lambda}:\KMod\to\KMod$$
corresponding to $\lambda\vdash d$ is defined for $V\in \Ob(\KMod)$ by
$$S^{\lambda}(V)=V^{\otimes d}\otimes_{\K\gpS_{d}} S_{\lambda}.$$
The $\GL(V)$-module $S^{\lambda}(V)$ is irreducible if $\dim(V)\ge l(\lambda)$ and otherwise we have $S^{\lambda}(V)=0$.

Let 
$$\mathfrak{a}:\gr\to \KMod$$
denote the abelianization functor and
$$\mathfrak{a}^{\#}=\Hom(-^{\ab},\K): \gr^{\op}\to \KMod$$
the dual of it.
By abuse of notation, let $\mathfrak{a}^{\#}$ denote the functor from $\kgrop$.
Then the $\catLie$-module $S_{\lambda}$ corresponds to the $\kgrop$-module $S^{\lambda}\circ \mathfrak{a}^{\#}$ via the equivalence of categories $\A^{L}_0(L^{\otimes -},H^{\otimes -})\otimes_{\catLie}-$ in Proposition \ref{adjunctionPowell}, that is, we have
\begin{gather*}
     \A^{L}_0(L^{\otimes -},H^{\otimes -})\otimes_{\catLie}S_{\lambda}\cong S^{\lambda}\circ \mathfrak{a}^{\#}.
\end{gather*}
Therefore, $\{S^{\lambda}\circ \mathfrak{a}^{\#}\mid d\ge 0, \lambda\vdash d\}$ is the set of isomorphism classes of simple objects in $\kgropMod^{\omega}$.

\section{The first Ext-groups in the category of $\A$-modules}\label{sectionExtgp}

In this section, we study the Ext-groups in $\AMod$ between simple $\A$-modules which are induced by Schur functors.
The $0$-th Ext-group $\Ext^0_{\AMod}(T(S^{\lambda}\circ\mathfrak{a}^{\#}),T(S^{\mu}\circ\mathfrak{a}^{\#}))$ between two $\A$-modules $T(S^{\lambda}\circ\mathfrak{a}^{\#})$ and $T(S^{\mu}\circ\mathfrak{a}^{\#})$ is computed as follows.

\begin{lemma}
We have
\begin{gather*}
    \begin{split}
        \Ext^0_{\AMod}(T(S^{\lambda}\circ\mathfrak{a}^{\#}),T(S^{\mu}\circ\mathfrak{a}^{\#}))
       \cong  
        \begin{cases}
                \K  & \text{if } \lambda=\mu\\
                0 & \text{otherwise}.
        \end{cases}
    \end{split}
\end{gather*}
\end{lemma}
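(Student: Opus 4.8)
The statement computes $\Ext^0$, i.e. $\Hom$, in the functor category $\AMod$ between the simple objects $T(S^\lambda\circ\mathfrak a^\#)$ and $T(S^\mu\circ\mathfrak a^\#)$. The first thing I would do is reduce to a computation over $\kgrop$. Since $T:\kgropMod^\omega\to\AMod^\omega$ is fully faithful (it is the restriction of the fully faithful functor $T:\kgropMod\to\AMod$), we have
\begin{gather*}
\Hom_{\AMod}(T(S^\lambda\circ\mathfrak a^\#),T(S^\mu\circ\mathfrak a^\#))\cong\Hom_{\kgropMod}(S^\lambda\circ\mathfrak a^\#,S^\mu\circ\mathfrak a^\#).
\end{gather*}
Alternatively one can argue via the adjunction $(T,U)$ together with $U\circ T=\id$: a morphism $T(S^\lambda\circ\mathfrak a^\#)\to T(S^\mu\circ\mathfrak a^\#)$ of $\A$-modules is the same as a morphism $S^\lambda\circ\mathfrak a^\#\to U\,T(S^\mu\circ\mathfrak a^\#)=S^\mu\circ\mathfrak a^\#$ of $\kgrop$-modules. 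Either way, the $\A$-linear structure collapses to the $\kgrop$-linear one, and the degree-$\geq 1$ part of $\A$ plays no role for $\Hom$.

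The second step is to compute $\Hom_{\kgropMod}(S^\lambda\circ\mathfrak a^\#,S^\mu\circ\mathfrak a^\#)$. Here I would invoke the fact, recalled just before this lemma, that $\{S^\lambda\circ\mathfrak a^\#\mid d\geq 0,\ \lambda\vdash d\}$ is a complete list of the isomorphism classes of simple objects of $\kgropMod^\omega$; equivalently, via Powell's equivalence $\catLieMod\simeq\kgropMod^\omega$, these correspond to the simple $\catLie$-modules $S_\lambda$. By Schur's lemma in the $\K$-linear abelian category $\kgropMod^\omega$ (over a field $\K$ of characteristic $0$, with these objects being absolutely simple since the Specht modules $S_\lambda$ are absolutely irreducible in characteristic $0$), $\Hom$ between two non-isomorphic simples vanishes, and $\Hom$ from a simple to itself is one-dimensional, spanned by the identity. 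It remains only to note that $S^\lambda\circ\mathfrak a^\#\cong S^\mu\circ\mathfrak a^\#$ precisely when $\lambda=\mu$, which follows from the classification statement (distinct partitions give non-isomorphic simples). Combining, $\Hom_{\kgropMod}(S^\lambda\circ\mathfrak a^\#,S^\mu\circ\mathfrak a^\#)$ is $\K$ if $\lambda=\mu$ and $0$ otherwise.

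The only genuinely delicate point is making sure Schur's lemma applies in the form I want, i.e. that these simples are \emph{absolutely} simple so that the endomorphism ring is exactly $\K$ and not merely a division algebra over $\K$. This is where characteristic $0$ is used: the Specht modules $S_\lambda$ are absolutely irreducible $\K\gpS_d$-modules, hence the corresponding $\catLie$-modules (and thus, under the equivalences of categories, the $\kgrop$-modules $S^\lambda\circ\mathfrak a^\#$ and finally the $\A$-modules $T(S^\lambda\circ\mathfrak a^\#)$) have endomorphism algebra $\K$. I would phrase the proof so that the fully faithfulness of $T$ transports both the simplicity and the absolute-simplicity statements from $\kgropMod^\omega$ to $\AMod$, and then cite Schur's lemma to conclude. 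No hard computation is needed; the content is entirely in chaining together the fully faithful functor $T$, Powell's/Kim's equivalences, and the known classification of simples.
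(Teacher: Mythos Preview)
Your argument is correct and matches the paper's: use that $T$ is fully faithful to reduce to $\Hom_{\kgropMod}(S^\lambda\circ\mathfrak a^\#,S^\mu\circ\mathfrak a^\#)$, then apply Schur's lemma (the paper passes explicitly through the equivalence with $\catLieMod$ and then $\mathbb{S}\text{-}\mathrm{Mod}$, but the content is identical).

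One small correction: your ``alternative'' via an adjunction $(T,U)$ is not quite right as stated, since $T$ is not left adjoint to $U$ in general (the left adjoint to $U$ is induction $\A\otimes_{\kgrop}-$, not $T$); your computation only goes through because the target $T(S^\mu\circ\mathfrak a^\#)$ already lies in the essential image of $T$, and that is precisely the fully-faithfulness argument again rather than an adjunction.
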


\begin{proof}
Since the functor $T$ is fully-faithful, we have
\begin{gather*}
    \begin{split}
        \Ext^0_{\AMod}(T(S^{\lambda}\circ\mathfrak{a}^{\#}),T(S^{\mu}\circ\mathfrak{a}^{\#}))
        &\cong \Hom_{\AMod}(T(S^{\lambda}\circ\mathfrak{a}^{\#}),T(S^{\mu}\circ\mathfrak{a}^{\#}))\\
        &\cong \Hom_{\kgropMod}(S^{\lambda}\circ\mathfrak{a}^{\#},S^{\mu}\circ\mathfrak{a}^{\#})\\
        &\cong \Hom_{\kgropMod^{\omega}}(S^{\lambda}\circ\mathfrak{a}^{\#},S^{\mu}\circ\mathfrak{a}^{\#})\\
        &\cong \Hom_{\catLieMod}(S_{\lambda},S_{\mu})\\
       &\cong \Hom_{\mathbb{S}\textbf{-}\mathrm{Mod}} (S_{\lambda},S_{\mu})
    \end{split}
\end{gather*}
by the equivalence of categories in Proposition \ref{adjunctionPowell}.
Therefore, the statement follows from Schur's lemma.
\end{proof}

In what follows, we compute the first Ext-groups in $\AMod$ between $T(S^{\lambda}\circ\mathfrak{a}^{\#})$ and $T(S^{\mu}\circ\mathfrak{a}^{\#})$.

\subsection{The Ext-groups in $\kgropMod$}
Vespa \cite{Vespa} computed the Ext-groups between tensor power functors $T^d\circ \mathfrak{a}$, symmetric power functors $S^d\circ \mathfrak{a}$ and exterior power functors $\Lambda^d\circ \mathfrak{a}$ in the functor category $\operatorname{Funct}(\gr,\KMod)$.

The category of polynomial functors is known to be thick, that is, closed under quotients, subobjects and extensions (see \cite[Proposition 3.7]{Hartl--Pirashvili--Vespa}).
Therefore, the inclusion $\kgropMod^{\omega}\hookrightarrow \kgropMod$ induces a $\K$-linear isomorphism
\begin{gather}\label{extensionofSchurfunctors}
    \Ext^1_{\kgropMod^{\omega}}(
            S^{\lambda}\circ\mathfrak{a}^{\#}, S^{\mu}\circ\mathfrak{a}^{\#})\cong \Ext^1_{\kgropMod}(
            S^{\lambda}\circ\mathfrak{a}^{\#}, S^{\mu}\circ\mathfrak{a}^{\#}).
\end{gather}
More generally, it follows from \cite[Th\'{e}or\`{e}me 1]{Djament--Pirashvili--Vespa} that the Ext-groups between polynomial functors in $\kgropMod$ are isomorphic to the Ext-groups in $\kgropMod^{\omega}$.

The first Ext-groups in $\kgropMod$ between Schur functors are obtained from \cite[Corollary 18.23]{Powell--Vespa}, which generalizes a partial result extracted from \cite[Theorem 4.2]{Vespa}.

\begin{lemma}\label{extgrop}
We have
     \begin{gather*}
            \dim_{\K}\Ext^1_{\kgropMod^{\omega}}(
            S^{\lambda}\circ\mathfrak{a}^{\#}, S^{\mu}\circ\mathfrak{a}^{\#})
            ={\begin{cases}
                \sum_{\rho\vdash |\lambda|-2}LR^{\lambda}_{\rho,1^2} LR^{\mu}_{\rho,1} & \text{if } |\mu|=|\lambda|-1\\
                0 & \text{otherwise}
            \end{cases}},
    \end{gather*}
    where $LR^{\bullet}_{\bullet,\bullet}$ denotes the Littlewood--Richardson coefficient.
\end{lemma}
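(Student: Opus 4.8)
The plan is to reduce the statement to the cited result \cite[Corollary 18.23]{Powell--Vespa}, which is stated for $\Ext^1$ in the full functor category $\kgropMod$, by showing that for polynomial functors the $\Ext^1$-groups in $\kgropMod$ and in the subcategory $\kgropMod^\omega$ of analytic functors coincide. This is precisely the isomorphism \eqref{extensionofSchurfunctors} already recorded in the text, so the proof is essentially a matter of assembling the pieces that have been laid out.

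First I would recall that $S^\lambda\circ\mathfrak{a}^\#$ is a polynomial functor of degree $|\lambda|$ on $\gr^{\op}$ (it is a Schur-functor construction applied to the polynomial functor $\mathfrak{a}^\#$), hence in particular analytic; so both sides of the claimed equality are defined. Next I would invoke the thickness of the category of polynomial functors \cite[Proposition 3.7]{Hartl--Pirashvili--Vespa}: since polynomial functors are closed under subobjects, quotients and extensions, a short exact sequence in $\kgropMod$ with polynomial outer terms has polynomial (hence analytic) middle term. This gives that the natural map $\Ext^1_{\kgropMod^\omega}(S^\lambda\circ\mathfrak{a}^\#,S^\mu\circ\mathfrak{a}^\#)\to \Ext^1_{\kgropMod}(S^\lambda\circ\mathfrak{a}^\#,S^\mu\circ\mathfrak{a}^\#)$ induced by the inclusion is an isomorphism — injectivity because an extension split in $\kgropMod$ is split in the full subcategory containing it, surjectivity because every extension in $\kgropMod$ already lies in $\kgropMod^\omega$ by thickness. (Alternatively one may cite \cite[Th\'eor\`eme 1]{Djament--Pirashvili--Vespa} directly for the comparison of $\Ext$-groups between polynomial functors.)

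Having identified the left-hand side with $\Ext^1_{\kgropMod}(S^\lambda\circ\mathfrak{a}^\#,S^\mu\circ\mathfrak{a}^\#)$, I would then simply quote \cite[Corollary 18.23]{Powell--Vespa}, which computes exactly this dimension and yields $\sum_{\rho\vdash|\lambda|-2}LR^{\lambda}_{\rho,1^2}LR^{\mu}_{\rho,1}$ when $|\mu|=|\lambda|-1$ and $0$ otherwise. One small bookkeeping point to address is that $\mathfrak{a}^\#$ is the dual of the abelianization functor on $\gr^{\op}$; I would note that in \cite{Powell--Vespa} the relevant functor category and the indexing of Littlewood--Richardson coefficients match the conventions used here (or supply the elementary translation, using that the Schur functor of a partition and its behavior under the relevant duality is controlled by the same $LR$ data), so that the formula transcribes verbatim. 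The only genuine content beyond citation is this dictionary check between the present setup and that of \cite{Powell--Vespa}.

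The main obstacle, such as it is, is not mathematical depth but the verification that the objects and the Ext-computation in \cite{Powell--Vespa} (phrased there in terms of functors on $\gr$, or on $\gr^{\op}$, with their normalization of Schur functors and Littlewood--Richardson coefficients) correspond precisely to $S^\lambda\circ\mathfrak{a}^\#$ and $S^\mu\circ\mathfrak{a}^\#$ as defined in this paper; once that identification is in place the result is immediate. Everything else — polynomiality of Schur functors, thickness, and the resulting $\Ext^1$ comparison isomorphism — is standard and already cited in the surrounding text.
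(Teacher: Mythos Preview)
Your proposal is correct and follows essentially the same approach as the paper: reduce to $\Ext^1_{\kgropMod}$ via the thickness isomorphism \eqref{extensionofSchurfunctors}, then cite \cite[Corollary 18.23]{Powell--Vespa}. The only difference is that the paper carries out explicitly the ``bookkeeping'' step you flag: it passes through the $\K$-linearization isomorphism $\operatorname{Funct}(\gr^{\op},\KMod)\cong\kgropMod$ and then applies duality to identify $\Ext^1_{\operatorname{Funct}(\gr^{\op},\KMod)}(S^{\lambda}\circ\mathfrak{a}^{\#}, S^{\mu}\circ\mathfrak{a}^{\#})$ with $\Ext^1_{\operatorname{Funct}(\gr,\KMod)}(S^{\mu}\circ\mathfrak{a}, S^{\lambda}\circ\mathfrak{a})$ (note the swap of $\lambda$ and $\mu$ and of $\mathfrak{a}^{\#}$ for $\mathfrak{a}$), which is the form in which the cited result applies.
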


\begin{proof}
Since $\K$-linearization induces an isomorphism
$$\operatorname{Funct}(\gr^{\op},\KMod)\xrightarrow{\cong} \kgropMod,$$
we have
\begin{gather*}
    \begin{split}
        \Ext^1_{\kgropMod}(
            S^{\lambda}\circ\mathfrak{a}^{\#}, S^{\mu}\circ\mathfrak{a}^{\#})
        &\cong 
        \Ext^1_{\operatorname{Funct}(\gr^{\op},\KMod)}(
            S^{\lambda}\circ\mathfrak{a}^{\#}, S^{\mu}\circ\mathfrak{a}^{\#})  \\
        &\cong \Ext^1_{\operatorname{Funct}(\gr,\KMod)}(
            S^{\mu}\circ\mathfrak{a}, S^{\lambda}\circ\mathfrak{a})
    \end{split}
\end{gather*}
by duality.
Then the statement follows from \eqref{extensionofSchurfunctors} and \cite[Corollary 18.23]{Powell--Vespa}.
\end{proof}

\subsection{The Yoneda Ext}
The category $\AMod$ is an abelian category with enough projectives.
For $\A$-modules $M$ and $M'$, the Ext-group $\Ext^*_{\AMod}(M,M')$ is defined to be the cohomology of the cochain complex $\Hom_{\AMod}(P_*,M')$, where $P_*$ is a projective resolution of $M$.
It is well known that the Ext-group is equivalent to the Yoneda Ext.
The Yoneda Ext $\Ext^1(M,M')$ of degree $1$ for $\A$-modules $M$ and $M'$ is defined to be the collection of equivalence classes of extensions, where an extension of $M$ by $M'$ is a short exact sequence
$$
0\to M'\to X\to M\to 0
$$
in $\AMod$, and where two extensions are equivalent if there is a commutative diagram
\begin{gather*}
    \xymatrix{
    0\ar[r]&M'\ar[r]\ar@{=}[d]&X\ar[r]\ar[d]^{\cong}&M\ar[r]\ar@{=}[d]&0\\
     0\ar[r]&M'\ar[r]&X'\ar[r]&M\ar[r]&0.
    }
\end{gather*}
Then there is a one-to-one correspondence
\begin{gather*}
    \Ext^1_{\AMod}(M,M')\cong \Ext^1(M,M')
\end{gather*}
(see \cite[Theorem 3.4.3]{Weibel} for example).

Since the forgetful functor $U$ and the functor $T$ are exact, we have a $\K$-linear map 
\begin{gather*}
    \pr: \Ext^1_{\AMod}(M,M')\to \Ext^1_{\kgropMod}(U(M),U(M'))
\end{gather*}
for $\A$-modules $M$ and $M'$, and a $\K$-linear map
\begin{gather*}
    \iota: \Ext^1_{\kgropMod}(N,N')\to \Ext^1_{\AMod}(T(N),T(N'))
\end{gather*}
for $\kgrop$-modules $N$ and $N'$. Since we have $\pr\circ \iota=\id_{\Ext^1_{\kgropMod}(N,N')}$,
the $\K$-linear map $\iota$ is injective.

In the rest of this section, we will compute the Yoneda Ext of degree $1$ for the $\A$-modules $T(S^{\lambda}\circ \mathfrak{a}^{\#})$ and $T(S^{\mu}\circ \mathfrak{a}^{\#})$.
Since the forgetful functor $U$ is exact and we have $U\circ T=\id_{\kgropMod}$, it follows from \eqref{extensionofSchurfunctors} that we have
\begin{gather*}
    \Ext^1_{\AMod}(T(S^{\lambda}\circ \mathfrak{a}^{\#}), T(S^{\mu}\circ \mathfrak{a}^{\#}))\cong  \Ext^1_{\AMod^{\omega}}(T(S^{\lambda}\circ \mathfrak{a}^{\#}), T(S^{\mu}\circ \mathfrak{a}^{\#})).
\end{gather*}

\begin{remark}
 Via the equivalence of categories between $\catLieCMod$ and $\AMod^{\omega}$ in Proposition \ref{equivalenceKim}, we have
 \begin{gather*}
     \Ext^1_{\AMod^{\omega}}(T(S^{\lambda}\circ \mathfrak{a}^{\#}), T(S^{\mu}\circ \mathfrak{a}^{\#}))\cong \Ext^1_{\catLieCMod}(T(S_{\lambda}), T(S_{\mu})).
 \end{gather*}
 We also have a $\K$-linear map
 \begin{gather*}
     \pr: \Ext^1_{\catLieCMod}(K,K')\to \Ext^1_{\catLieMod}(U(K),U(K'))
 \end{gather*}
 induced by the forgetful functor $U$ defined in \eqref{UforcatLieC} and a $\K$-linear map
 \begin{gather*}
     \iota: \Ext^1_{\catLieMod}(J,J')\to \Ext^1_{\catLieCMod}(T(J),T(J'))
 \end{gather*}
 induced by the functor $T$ defined in \eqref{TforcatLieC}, which satisfy $\pr\circ \iota=\id_{\Ext^1_{\catLieMod}(J,J')}$.
\end{remark}

\subsection{The first Ext-groups in $\catLieCMod$}
Here, we compute the first Ext-groups between simple objects in $\catLieCMod$ induced by Specht modules.
The first version of the draft of the present paper included the direct computation of the first Ext-groups between Schur functors in $\AMod$ in some cases, and left other cases as a conjecture.
After the author sent a draft of this paper to Geoffrey Powell, he suggested the following theorem, which implies that her conjecture is immediate in the case of $\catLieCMod$.

\begin{theorem}\label{extcatLieC}
    Let $\lambda,\mu$ be partitions and set $n=|\lambda|$ and $m=|\mu|$.
    Then we have
    \begin{gather*}
    \begin{split}
        \Ext^1_{\catLieCMod}(T(S_{\lambda}), T(S_{\mu}))
        \cong
        \begin{cases}
        S_{\mu}\otimes_{\K\gpS_{m}}\catLie(n,m)\otimes_{\K\gpS_{n}}S_{\lambda} & \text{if }m=n-1,\\
           S_{\mu}\otimes_{\K\gpS_{m}}\ub(n,m)\otimes_{\K\gpS_{n}}S_{\lambda}  & \text{if }m=n+2,\\
            0 & \text{otherwise},
        \end{cases}
    \end{split}
    \end{gather*}
    where $S_{\mu}$ is considered as a right $\K\gpS_{m}$-module.
    Moreover, we have
    \begin{gather*}
    \begin{split}
        \dim_{\K}\Ext^1_{\catLieCMod}(T(S_{\lambda}), T(S_{\mu}))
        =
        \begin{cases}
           \sum_{\rho\vdash n-2}LR^{\lambda}_{\rho,1^2}LR^{\mu}_{\rho,1} & \text{if }m=n-1,\\
           LR^{\mu}_{\lambda,2} & \text{if }m=n+2,\\
            0 & \text{otherwise}.
        \end{cases}
    \end{split}
    \end{gather*}
\end{theorem}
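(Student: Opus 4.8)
The plan is to exploit the retraction $\pr\circ\iota=\id$ recorded in the Remark above: split the Ext-group into a ``degree $0$'' part handled by the known results and a ``new'' part controlled by the Casimir element, and then compute the new part by hand.

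First I would write $\pr\colon\Ext^1_{\catLieCMod}(T(S_\lambda),T(S_\mu))\to\Ext^1_{\catLieMod}(S_\lambda,S_\mu)$ for the map induced by the forgetful functor (using $U\circ T=\id$) and $\iota$ for its section. Since $\pr\circ\iota=\id$ we obtain
\begin{gather*}
\Ext^1_{\catLieCMod}(T(S_\lambda),T(S_\mu))\cong\Ext^1_{\catLieMod}(S_\lambda,S_\mu)\oplus\ker\pr ,
\end{gather*}
where $\ker\pr$ consists of the classes of extensions $0\to T(S_\mu)\to X\to T(S_\lambda)\to 0$ that become split after restriction along $\catLie\hookrightarrow\catLieC$. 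The first summand is computed by combining Powell's equivalence $\catLieMod\simeq\kgropMod^{\omega}$ (Proposition \ref{adjunctionPowell}) with Lemma \ref{extgrop}: it vanishes unless $m=n-1$ and then has dimension $\sum_{\rho\vdash n-2}LR^{\lambda}_{\rho,1^2}LR^{\mu}_{\rho,1}$. To identify it with $S_{\mu}\otimes_{\K\gpS_m}\catLie(n,m)\otimes_{\K\gpS_n}S_{\lambda}$ when $m=n-1$, I would use the short exact sequence $0\to R\to\catLie(n,-)\otimes_{\K\gpS_n}S_{\lambda}\to S_{\lambda}\to 0$ in $\catLieMod$, where $R$ vanishes on objects $\ge n$; applying $\Hom_{\catLieMod}(-,S_{\mu})$ and using that $S_{\mu}$ is concentrated at the object $m=n-1$ (so no object lies strictly between $m$ and $n$ and no naturality condition intervenes, and $\Hom_{\catLieMod}(\catLie(n,-)\otimes_{\K\gpS_n}S_\lambda,S_\mu)=0$ as $m\ne n$) gives $\Ext^1_{\catLieMod}(S_{\lambda},S_{\mu})\cong\Hom_{\K\gpS_m}(\catLie(n,m)\otimes_{\K\gpS_n}S_{\lambda},S_{\mu})$, which over a field of characteristic $0$, by self-duality of Specht modules, is $S_{\mu}\otimes_{\K\gpS_m}\catLie(n,m)\otimes_{\K\gpS_n}S_{\lambda}$.

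The core step is the analysis of $\ker\pr$. Given an extension split over $\catLie$, choose a $\catLie$-linear splitting $X|_{\catLie}\cong S_{\lambda}\oplus S_{\mu}$. For any morphism $\phi$ of $\catLieC$ of positive degree, $X(\phi)$ maps $X$ into the submodule $T(S_{\mu})$ (since $T(S_{\lambda})$ is a $\catLieC$-module on which positive degrees act as $0$) and annihilates it (since $T(S_{\mu})$ is such a module as well); hence $X(\phi)=0$ whenever $\phi$ factors as a composite of two morphisms of positive degree. By the decomposition of morphisms of $\catLieC$ recalled in the preliminaries (every element of $\catLieC(a,b)_d$ is a linear combination of morphisms $f(\id\otimes c^{\otimes d})$ with $f\in\catLie(a+2d,b)$), and since for $d\ge 2$ such a morphism equals $\bigl(f(\id\otimes c^{\otimes(d-1)})\bigr)\circ(\id_a\otimes c)$, a composite of two positive-degree morphisms, every morphism of degree $\ge 2$ acts as $0$ on $X$. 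Therefore $X$ is completely determined by the action of the degree-$1$ part, which unwinds to a morphism of $\catLie$-bimodules $(\catLieC)_1\to\underline{\Hom}_{\K}(S_{\lambda},S_{\mu})$, where $(\catLieC)_1$ is the degree-$1$ component regarded as a $\catLie$-bimodule and $\underline{\Hom}_{\K}(S_{\lambda},S_{\mu})$ is the bimodule $(a,b)\mapsto\Hom_{\K}(S_{\lambda}(a),S_{\mu}(b))$; moreover the extension is split exactly when this morphism vanishes, and Baer sum corresponds to addition. Hence $\ker\pr\cong\Hom_{\catLie\otimes\catLie^{\op}}\bigl((\catLieC)_1,\underline{\Hom}_{\K}(S_{\lambda},S_{\mu})\bigr)$.

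To finish, I would compute this bimodule Hom-space. Since $\underline{\Hom}_{\K}(S_{\lambda},S_{\mu})$ is supported at the single object-pair $(n,m)$, such a morphism is the same as a $(\gpS_n,\gpS_m)$-equivariant map out of the quotient $\overline{\catLieC(n,m)_1}$ of $\catLieC(n,m)_1$ by the images of $\catLieC(a,b)_1$ under composition with morphisms of $\catLie$, taken over all object-pairs $(a,b)\ne(n,m)$. Using $\catLie(a,b)=0$ for $a<b$ together with $\catLieC(n,m)_1=\catLie(n+2,m)\cdot(\id_n\otimes c)$, one checks: for $m\le n+1$ every degree-$1$ morphism $n\to m$ is the post-composition of $\id_n\otimes c\in\catLieC(n,n+2)_1$ by a morphism of $\catLie$ into the object $m\ne n+2$, so the quotient is $0$; for $m>n+2$ the space $\catLieC(n,m)_1$ is already $0$; and for $m=n+2$ there are no nontrivial such images, so $\overline{\catLieC(n,n+2)_1}=\catLieC(n,n+2)_1$. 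Computing the stabilizer of $\id_n\otimes c$ under post-composition by $\gpS_{n+2}$ to be the transposition $(\,n{+}1\ \ n{+}2\,)$ identifies $\catLieC(n,n+2)_1$ with $\K\gpS_{n+2}\otimes_{\K\gpS_2}\K$ as a left $\gpS_{n+2}$-module, with the right $\gpS_n$-action through the standard embedding $\gpS_n\hookrightarrow\gpS_{n+2}$; this is precisely the $(\gpS_{n+2},\gpS_n)$-bimodule $\ub(n,n+2)$ (which also follows from fullness of $\catLie\otimes_{\repS}\ub\to\catLieC$ together with a dimension count). Feeding this back in and converting $\Hom$ to $\otimes$ via self-duality of permutation modules and of Specht modules in characteristic $0$ yields $\ker\pr\cong S_{\mu}\otimes_{\K\gpS_m}\ub(n,m)\otimes_{\K\gpS_n}S_{\lambda}$ when $m=n+2$ and $0$ otherwise; since $m=n-1$ and $m=n+2$ are incompatible the two cases do not overlap, proving the displayed isomorphism. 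For the dimension, Frobenius reciprocity gives
\begin{gather*}
\dim_{\K}\bigl(S_{\mu}\otimes_{\K\gpS_{n+2}}\ub(n,n+2)\otimes_{\K\gpS_n}S_{\lambda}\bigr)=\dim_{\K}\Hom_{\gpS_{n+2}}\bigl(\operatorname{Ind}_{\gpS_n\times\gpS_2}^{\gpS_{n+2}}(S_{\lambda}\boxtimes\mathbf 1),\,S_{\mu}\bigr)=LR^{\mu}_{\lambda,2},
\end{gather*}
since the trivial $\gpS_2$-representation is $S_{(2)}$.

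I expect the main obstacle to be the bookkeeping in the last paragraph: proving that $\overline{\catLieC(n,m)_1}$ vanishes for $m\ne n+2$ and, for $m=n+2$, identifying it as a $(\gpS_{n+2},\gpS_n)$-bimodule with $\ub(n,n+2)$ — keeping the two commuting symmetric-group actions straight and carrying out the self-duality conversions with the correct left/right conventions. By contrast, the $\pr/\iota$ splitting and the vanishing of the degree-$\ge 2$ action (via the factorization of morphisms in $\catLieC$) are comparatively formal.
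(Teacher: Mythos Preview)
Your argument is correct and rests on the same two observations as the paper's proof: an extension is supported only at the objects $n$ and $m$, and every morphism of $\catLieC$ factors through either a bracket (degree~$0$) or $\id\otimes c$ (degree~$1$), so only $\catLieC(n,m)$ can act nontrivially. The paper, however, does not pass through the $\pr/\iota$ splitting or the bimodule $\Hom$ formalism: it simply takes an extension $K$, notes that $K(p)=0$ for $p\notin\{n,m\}$, and reads off directly that $K(\id_n\otimes c)$ and $K(\id_i\otimes[,]\otimes\id_{n-i-2})$ both vanish unless $m=n+2$ or $m=n-1$ respectively, which immediately forces $K$ to be determined by $K|_{\catLie(n,n-1)}$ in the first case and by $K|_{\catLieC(n,n+2)_1}\cong\ub(n,n+2)$ in the second. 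Your route packages the same content more abstractly (and would scale better if one wanted to treat several degrees at once), at the cost of the bookkeeping you correctly flag---splitting choices, the bimodule quotient, and the Hom/tensor conversions; the paper's route is shorter precisely because the support condition makes those naturality constraints vacuous, so it can jump straight to the $(\gpS_m,\gpS_n)$-equivariance statement without naming the intermediate bimodule.
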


\begin{proof}
Let $K$ be an extension of $T(S_{\lambda})$ by $T(S_{\mu})$ in $\catLieCMod$,
that is, we have a short exact sequence of $\catLieC$-modules
\begin{gather*}
    0\to T(S_{\mu})\to K\to T(S_{\lambda})\to 0.
\end{gather*}
Then we have $K(n)=S_{\lambda}, K(m)=S_{\mu}, K(p)=0$ for $p\neq m,n$. Moreover, we have $K(f)=0$ for any morphism $f\in \catLieC(p,q)$ unless $(p,q)=(n,m)$.

Note that any morphism $f\in \catLieC(p,q)_d$ with $d\ge 1$ can be decomposed into a linear combination of morphisms of the form
$$g(\id_{p}\otimes c^{\otimes d})=g(\id_{p+2}\otimes c^{\otimes d-1})(\id_p\otimes c)$$
for some $g\in \catLie(p+2d,q)$, and any morphism $f\in \catLieC(p,q)_0=\catLie(p,q)$ can be decomposed into a linear combination of morphisms of the form
$$f=g(\id_{i}\otimes [,]\otimes \id_{p-i-2})$$
for some $i\in \{0,\cdots,p-2\}$ and $g\in \catLie(p-1,q)$.

If $m\notin \{n-1,n+2\}$, then we have $K(\id_n\otimes c)=0$ and $K(\id_{i}\otimes [,]\otimes \id_{n-i-2})=0$ for any $i\in \{0,\cdots,n-2\}$, 
and thus we have $K(f)=0$ for any $f\in \catLieC(n,m)$.
Therefore, we have $\Ext^1_{\catLieCMod}(T(S_{\lambda}), T(S_{\mu}))=0$.

If $m=n-1$, then we have $K(\id_n\otimes c)=0$ and thus $K(f)=0$ for any $f\in \catLieC(p,q)_{d\ge 1}$, which implies that $K$ is the trivial $\catLieC$-module $TU(K)$ induced by an extension $U(K)$ of $S_{\lambda}$ by $S_{\mu}$ in $\catLieMod$.
Therefore, we have 
$$\Ext^1_{\catLieCMod}(T(S_{\lambda}), T(S_{\mu}))\cong \Ext^1_{\catLieMod}(S_{\lambda}, S_{\mu}).$$
Since $K$ is determined by $K(f)$ for $f\in \catLie(n,n-1)$ which is compatible with the $\K\gpS_n$-module structure of $K(n)=S_{\lambda}$ and the $\K\gpS_{n-1}$-module structure of $K(n-1)=S_{\mu}$, we obtain
$$\Ext^1_{\catLieMod}(S_{\lambda}, S_{\mu})\cong S_{\mu}\otimes_{\K\gpS_{n-1}}\catLie(n,n-1) \otimes_{\K\gpS_{n}} S_{\lambda},$$
where we consider $S_{\mu}$ as a right $\K\gpS_{n-1}$-module.
It follows from Proposition \ref{adjunctionPowell} and Lemma \ref{extgrop} that 
\begin{gather*}
    \begin{split}
\dim_{\K}\Ext^1_{\catLieMod}(S_{\lambda}, S_{\mu})
&=\dim_{\K}\Ext^1_{\kgropMod^{\omega}}(S^{\lambda}\circ \mathfrak{a}^{\#}, S^{\mu}\circ \mathfrak{a}^{\#})\\
&= \sum_{\rho\vdash n-2}LR^{\lambda}_{\rho,1^2}LR^{\mu}_{\rho,1}.
    \end{split}
\end{gather*}

If $m=n+2$, then we have $K(f)=0$ for any $f\in \catLieC(p,q)$ unless $f\in \catLieC(n,n+2)_1\cong \ub(n,n+2)$.
Therefore, $K$ is determined by $K(f)$ for $f\in \ub(n,n+2)$ which is compatible with the $\K\gpS_n$-module structure of $K(n)=S_{\lambda}$ and the $\K\gpS_{n+2}$-module structure of $K(n+2)=S_{\mu}$, and thus we obtain
\begin{gather*}
    \Ext^1_{\catLieCMod}(T(S_{\lambda}), T(S_{\mu}))\cong S_{\mu}\otimes_{\K\gpS_{n+2}}\ub(n,n+2) \otimes_{\K\gpS_{n}} S_{\lambda}.
\end{gather*}
The hom-space $\ub(n,n+2)$ is generated by $\id_{n}\otimes c$ as a left $\K\gpS_{n+2}$-module.
Therefore, we have
\begin{gather*}
\begin{split}
    S_{\mu}\otimes_{\K\gpS_{n+2}}\ub(n,n+2) \otimes_{\K\gpS_{n}} S_{\lambda}
    &\cong c_{\mu}\K\gpS_{n+2}\otimes_{\K\gpS_{n+2}}\K\gpS_{n+2}\langle \id_n\otimes c\rangle \otimes_{\K\gpS_{n}}\K\gpS_{n}c_{\lambda}\\
    &\cong   c_{\mu}\K\gpS_{n+2}(c_{\lambda}\otimes c_{(2)}).
\end{split}
\end{gather*}
By the basic fact of representation theory of symmetric groups,
we have
\begin{gather*}
    \dim_{\K}(c_{\mu}\K \gpS_{n+2}(c_{\lambda}\otimes c_{(2)}))=LR^{\mu}_{\lambda,2},
\end{gather*}
which completes the proof.
\end{proof}

Via the equivalence of categories in Proposition \ref{equivalenceKim}, by Theorem \ref{extcatLieC}, we obtain the first Ext-groups between simple objects in $\AMod^{\omega}$ induced by Schur functors.

\begin{theorem}\label{Ext1Amod}
 Let $\lambda,\mu$ be partitions and set $n=|\lambda|$ and $m=|\mu|$.
 Then we have
    \begin{gather*}
    \begin{split}
        \dim_{\K}\Ext^1_{\AMod}(T(S^{\lambda}\circ \mathfrak{a}^{\#}), T(S^{\mu}\circ \mathfrak{a}^{\#}))
        =
        \begin{cases}
        \sum_{\rho\vdash n-2}LR^{\lambda}_{\rho,1^2}LR^{\mu}_{\rho,1} & \text{if } m=n-1,\\
         LR^{\mu}_{\lambda,2} & \text{if } m=n+2,\\
            0 & \text{otherwise}.
        \end{cases}
    \end{split}
    \end{gather*}
\end{theorem}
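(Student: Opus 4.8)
The statement is essentially immediate from Theorem \ref{extcatLieC} once the relevant categories and objects have been matched up, so my plan is to make this matching explicit and then read off the dimension formula.

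First I would invoke the reduction to the analytic subcategory recorded earlier in this section. Since $T$ restricts to a functor $\kgropMod^{\omega}\to\AMod^{\omega}$, since the forgetful functor $U$ is exact with $U\circ T=\id_{\kgropMod}$, and since \eqref{extensionofSchurfunctors} (itself a consequence of the thickness of polynomial functors, resp. of Djament--Pirashvili--Vespa) identifies the $\Ext^1$ in $\kgropMod$ with that in $\kgropMod^{\omega}$, we obtain
\[
\Ext^1_{\AMod}(T(S^{\lambda}\circ\mathfrak{a}^{\#}),T(S^{\mu}\circ\mathfrak{a}^{\#}))\cong\Ext^1_{\AMod^{\omega}}(T(S^{\lambda}\circ\mathfrak{a}^{\#}),T(S^{\mu}\circ\mathfrak{a}^{\#})).
\]
Thus it suffices to compute the first Ext-group in $\AMod^{\omega}$.

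Second, I would transport this Ext-group across Kim's equivalence $\catLieCMod\simeq\AMod^{\omega}$ of Proposition \ref{equivalenceKim}. An equivalence of abelian categories induces isomorphisms on all Ext-groups, so it remains only to identify the images of the two objects. By the subsection on Specht modules and Schur functors, the $\catLie$-module $S_{\lambda}$ corresponds to $S^{\lambda}\circ\mathfrak{a}^{\#}$ under Powell's equivalence (Proposition \ref{adjunctionPowell}), and the functor $T\colon\catLieMod\to\catLieCMod$ of \eqref{TforcatLieC} corresponds to the functor $T\colon\kgropMod^{\omega}\to\AMod^{\omega}$ of \eqref{TforAomega} under the two equivalences; hence Kim's equivalence carries $T(S_{\lambda})$ to $T(S^{\lambda}\circ\mathfrak{a}^{\#})$. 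Consequently
\[
\Ext^1_{\AMod^{\omega}}(T(S^{\lambda}\circ\mathfrak{a}^{\#}),T(S^{\mu}\circ\mathfrak{a}^{\#}))\cong\Ext^1_{\catLieCMod}(T(S_{\lambda}),T(S_{\mu})),
\]
which is precisely the isomorphism already observed in the Remark above. Substituting the dimension formula of Theorem \ref{extcatLieC} into the right-hand side then yields the claimed case distinction.

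I expect no genuine obstacle: all ingredients — the analytic reduction, Kim's equivalence, the correspondence of the two copies of $T$, and the computation in $\catLieCMod$ — are already available from Section 2 and Theorem \ref{extcatLieC}, and the proof amounts to assembling them. The one point deserving a sentence of care is that Kim's equivalence is compatible with the $\N$-gradings (equivalently with the degree-$0$ structures), so that it genuinely intertwines the two functors $T$ with Powell's equivalence on the degree-$0$ part; but this compatibility is exactly what is recorded in the sentence following \eqref{TforcatLieC}, so it can simply be cited.
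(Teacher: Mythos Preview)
Your proposal is correct and follows exactly the approach of the paper: the paper obtains Theorem~\ref{Ext1Amod} directly from Theorem~\ref{extcatLieC} via Kim's equivalence (Proposition~\ref{equivalenceKim}), after the reduction to $\AMod^{\omega}$ already recorded in the Yoneda Ext subsection and the accompanying Remark. Your write-up simply makes the identifications more explicit than the paper does.
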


\subsection{The computation in $\AMod$}

The computation of extensions in $\AMod$ is more complicated than that in $\catLieCMod$.
In the rest of this section, we give the direct computation of the Yoneda Ext of degree $1$ for symmetric power functors and exterior power functors in $\AMod$.
In the computation, we use the following lemma.

\begin{lemma}\label{extensionAB}
Let $N,N'$ be $\kgrop$-modules.
Let $F$ be an extension of $T(N)$ by $T(N')$ in $\AMod$.
Then $F$ satisfies the following properties:
\begin{itemize}
    \item $U(F)$ is an extension of $N$ by $N'$ in $\kgropMod$,
    in particular,
    for $m\ge 0$, we have a $\K$-linear isomorphism   
    $$F(m)\cong N(m)\oplus N'(m),$$
    and for a morphism $f\in \A_0(m,n)$, we have 
    $$F(f)((x,y))= (N(f)(x), F(f)(x)-N(f)(x)+N'(f)(y))$$
    for $(x,y)\in N(m)\oplus N'(m)\cong F(m)$,
    \item for a morphism $f\in \A_1(m,n)$, we have 
    $$F(f)((x,y))= (0,F(f)(x))$$ 
    for $(x,y)\in N(m)\oplus N'(m)\cong F(m)$,
    \item for a morphism $f\in \A_{\ge 2}(m,n)$, we have $F(f)=0$.
\end{itemize}
Therefore, for any morphisms $f_1\in \A_1(n,p)$ and $g_0\in \A_0(m,n)$, we have
\begin{gather}\label{F=Ag0}
    F(f_1\circ g_0)(x)=F(f_1)(N(g_0)(x))\in N'(p)
\end{gather}
for $x\in N(m)$.
\end{lemma}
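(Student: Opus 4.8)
The plan is to exploit the fact that $T(N)$ and $T(N')$ are \emph{trivial} $\A$-modules, in the sense that every morphism of positive degree in $\A$ acts by zero on them, and then to read off what the remaining freedom in an extension can be. First I would record that, since $U$ is exact and $U\circ T=\id_{\kgropMod}$, applying $U$ to the short exact sequence $0\to T(N')\to F\to T(N)\to 0$ gives a short exact sequence $0\to N'\to U(F)\to N\to 0$ in $\kgropMod$; in particular each $F(m)$ is (non-canonically) $N(m)\oplus N'(m)$, and choosing the splittings coherently as $\K$-linear maps (but not as $\kgrop$-maps) fixes the bookkeeping used in the statement. The displayed formula $F(f)((x,y))=(N(f)(x),F(f)(x)-N(f)(x)+N'(f)(y))$ for $f\in\A_0(m,n)$ is then just the tautological description of an extension in the functor category $\kgropMod$ relative to these splittings: the second component is forced to differ from $N'(f)(y)$ by the "off-diagonal" term, which by definition is $F(f)(x)-N(f)(x)$ once one identifies the first component of $F(f)(x,0)$ with $N(f)(x)$.

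Next I would treat the positive-degree morphisms. The key point is that for $f\in\A_d(m,n)$ with $d\ge 1$, the source and target of $F(f)$, after projecting along $U$, land in submodules on which $\A_{\ge 1}$ acts trivially. Concretely: $T(N')\subset F$ is a sub-$\A$-module isomorphic to $T(N')$, so $F(f)$ restricted to the copy of $N'(m)$ is $T(N')(f)=0$; hence $F(f)(0,y)=0$, which already gives $F(f)((x,y))=F(f)((x,0))$. Dually, the quotient $F/T(N')\cong T(N)$ has $T(N)(f)=0$, so $F(f)((x,0))$ lies in the kernel of $F(n)\twoheadrightarrow N(n)$, i.e.\ in the copy of $N'(n)$; this is exactly the assertion $F(f)((x,y))=(0,F(f)(x))$ with $F(f)(x)\in N'(n)$. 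For $d\ge 2$ I would invoke Lemma \ref{decompositionofA}: any such $f$ factors through $\A_1(\,\cdot\,,n)\circ\A_{\ge 1}(m,\,\cdot\,)$ (it is a linear combination of morphisms containing at least two $\tilde c$'s, hence of the form $g_1\circ h_1$ with $g_1,h_1$ of positive degree), and since $h_1$ already sends everything into the copy of $N'$ on which $g_1$ then acts as $T(N')(g_1)=0$, we get $F(f)=0$.

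Finally, the last displayed identity \eqref{F=Ag0} is immediate from the previous two: for $f_1\in\A_1(n,p)$ and $g_0\in\A_0(m,n)$ and $x\in N(m)$, write $x$ as $(x,0)$ under the splitting; by the degree-$0$ formula $F(g_0)((x,0))=(N(g_0)(x),\,F(g_0)(x)-N(g_0)(x))$, and then applying $F(f_1)$ and using that $F(f_1)$ kills the $N'$-component and reads off only the $N$-component gives $F(f_1\circ g_0)(x)=F(f_1)(N(g_0)(x))$, landing in $N'(p)$. The one step that needs genuine care — as opposed to pure bookkeeping — is the $d\ge 2$ vanishing: there one must be sure that the factorization through two positive-degree pieces supplied by Lemma \ref{decompositionofA} (via the $\tilde c^{\otimes d}$ factor with $d\ge 2$) is available \emph{as a composite in $\A$}, so that functoriality of $F$ applies; I expect this to be the main, though still short, obstacle, and it is handled entirely by Habiro--Massuyeau's normal form in Lemma \ref{decompositionofA}.
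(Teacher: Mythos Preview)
Your proposal is correct and follows essentially the same approach as the paper's proof: both use that $T(N')\hookrightarrow F\twoheadrightarrow T(N)$ forces positive-degree morphisms to kill the $N'$-summand and land in it, and then factor any $f\in\A_{\ge 2}$ as a composite of two positive-degree pieces to obtain $F(f)=0$. The only cosmetic difference is that you invoke Lemma \ref{decompositionofA} explicitly for this factorization, whereas the paper simply asserts that any degree-$\ge 2$ morphism is a linear combination of composites of degree-$1$ morphisms.
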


\begin{proof}
    For a morphism $f\in \A_0(m,n)$, it follows from
    $$F(f)|_{N'(m)}:N'(m)\xrightarrow{T(N')(f)}N'(n)\hookrightarrow F(n)$$
    that we have
    $F(f)(y)=(0,N'(f)(y))$ for $y\in N'(m)$,
    and it follows from
    $$T(N)(f):N(m)\xrightarrow{F(f)\mid_{N(m)}}F(n)\twoheadrightarrow N(n)$$
    that we have $F(f)(x)=(N(f)(x),F(f)(x)-N(f)(x))$ for $x\in N(m)$.
    
    For a morphism $f\in \A_1(m,n)$, we have
    $$F(f)|_{N'(m)}:N'(m)\xrightarrow{T(N')(f)=0}N'(n)\hookrightarrow F(n)$$
    and 
    $$0=T(N)(f):N(m)\xrightarrow{F(f)\mid_{N(m)}}F(n)\twoheadrightarrow N(n).$$
    Therefore, we have $F(f)(x,y)=(0,F(f)(x))$ for $x\in N(m), y\in N'(m)$.
    It is easy to see that we have $F(f\circ g)=F(f)\circ F(g)=0$ for any composable morphisms $f\in \A_1(m,n)$ and $g\in \A_1(p,m)$.
    Therefore, we have $F(f)=0$ for $f\in \A_{\ge 2}(m,n)$ since any morphism of $\A_{\ge 2}(m,n)$ is a linear combination of morphisms which are obtained from degree $1$ morphisms by composition.

    The equation \eqref{F=Ag0} follows from
    \begin{gather*}
    \begin{split}
    F(f_1\circ g_0)(x)&=F(f_1)(F(g_0)(x))\\
    &=F(f_1)(N(g_0)(x))+F(f_1)(F(g_0)(x)-N(g_0)(x))\\
    &=F(f_1)(N(g_0)(x)).
    \end{split}
    \end{gather*}
    This completes the proof.
\end{proof}

We compute the first Ext-groups between symmetric power functors in $\AMod$.

\begin{theorem}\label{extsym}
    For $d,d'\ge 0$, we have
    \begin{gather*}
        \begin{split}
            \Ext^1_{\AMod}(T(S^{d}\circ\mathfrak{a}^{\#}), T(S^{d'}\circ\mathfrak{a}^{\#}))
            \cong\begin{cases}
                \K & \text{if }d'=d+2\\
                0 & \text{otherwise}
            \end{cases}.
        \end{split}
    \end{gather*}
\end{theorem}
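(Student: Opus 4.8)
The plan is to specialize the general machinery to $\lambda = (d)$ and $\mu = (d')$ and then compute the relevant Littlewood--Richardson coefficient by hand. By Theorem \ref{Ext1Amod} (equivalently Theorem \ref{extcatLieC} together with Proposition \ref{equivalenceKim}), the $\K$-dimension of $\Ext^1_{\AMod}(T(S^{d}\circ\mathfrak{a}^{\#}), T(S^{d'}\circ\mathfrak{a}^{\#}))$ is $\sum_{\rho\vdash d-2}LR^{(d)}_{\rho,1^2}LR^{(d')}_{\rho,1}$ when $d'=d-1$, is $LR^{(d')}_{(d),2}$ when $d'=d+2$, and is $0$ otherwise. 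So the theorem reduces to two elementary facts about Littlewood--Richardson coefficients: first, that $LR^{(d)}_{\rho,1^2}=0$ for every $\rho\vdash d-2$ (so the $d'=d-1$ case contributes nothing), and second, that $LR^{(d+2)}_{(d),2}=1$.

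For the first fact, recall that $LR^{(d)}_{\rho,1^2}$ counts the ways to obtain the single-row partition $(d)$ as $\rho$ plus a skew shape of content $1^2 = (1,1)$; but a Littlewood--Richardson filling of the skew shape $(d)/\rho$ with two boxes can be a column-strict (semistandard) filling satisfying the lattice word condition only if $\rho=(d-2)$, and in that case the two added boxes lie in a single row, which forces the content to be $(2)$ rather than $(1,1)$. Hence $LR^{(d)}_{\rho,1^2}=0$ for all $\rho$, and the $d'=d-1$ summand vanishes. (Alternatively: $S^{(d)}\circ\mathfrak{a}^{\#}$ is the $d$-th symmetric power, and by the Pieri rule a skew shape $(d)/\rho$ of size $2$ that is a horizontal strip has content $(2)$, not $(1^2)$.) For the second fact, $LR^{(d+2)}_{(d),2}$ counts LR fillings of the skew shape $(d+2)/(d)$ — which consists of two boxes in the last two columns of a single row — with content $(2)$; there is exactly one such filling (both boxes labelled $1$), so $LR^{(d+2)}_{(d),2}=1$.

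Putting these together: for $d'=d+2$ the dimension is $1$, for $d'=d-1$ the dimension is $0$, and for all other $d'$ the dimension is $0$ by Theorem \ref{Ext1Amod}. Since both sides are finite-dimensional $\K$-vector spaces, this yields the claimed isomorphism. The only genuinely substantive input is Theorem \ref{Ext1Amod} (hence Theorem \ref{extcatLieC}), which is already available; the remaining work is the routine LR-coefficient bookkeeping just described, and the main (minor) obstacle is simply being careful that the vanishing $LR^{(d)}_{\rho,1^2}=0$ holds for every $\rho$ rather than checking only $\rho=(d-2)$.

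Remark: one could also prove this directly in $\AMod$ using Lemma \ref{extensionAB}, by analyzing an extension $0\to T(S^{d'}\circ\mathfrak{a}^{\#})\to F\to T(S^{d}\circ\mathfrak{a}^{\#})\to 0$ and showing it is classified by the action of the degree-$1$ morphism $\id_d\otimes\tilde c$ (together with the relation \eqref{F=Ag0}), reducing to a one-dimensional space of $\gpS_{d+2}$-equivariant maps $S^{d}(\K^{\infty})\otimes (\text{trivial})\to S^{d+2}(\K^{\infty})$; but routing through Theorem \ref{Ext1Amod} is cleaner.
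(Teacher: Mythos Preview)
Your argument is correct: specializing Theorem \ref{Ext1Amod} to $\lambda=(d)$ and $\mu=(d')$ and evaluating the two relevant Littlewood--Richardson coefficients via Pieri's rule gives the result immediately, and there is no circularity since Theorem \ref{Ext1Amod} is derived from Theorem \ref{extcatLieC} and Proposition \ref{equivalenceKim}, neither of which uses Theorem \ref{extsym}.

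The paper, however, deliberately takes the direct route you sketch in your closing remark. It works entirely inside $\AMod$ using Lemma \ref{extensionAB}: taking the generator $x=x_1\cdots x_d$ of $S^{d}\circ\mathfrak{a}^{\#}$, it writes $F(\id_d\otimes\tilde c)(x)$ as a general element $\sum_I a_I\, x_1^{i_1}\cdots x_{d+2}^{i_{d+2}}$ of $S^{d'}\circ\mathfrak{a}^{\#}(d+2)$ and then uses the Casimir relations \eqref{Casimir2-tensorDelta}, \eqref{Casimir2-tensorDelta'} together with a primitivity-style argument (comparing $\Delta$ with $\eta$ in each tensor slot) to force every exponent $i_j$ to equal $1$, which pins down $d'=d+2$; it then constructs the nontrivial extension explicitly and verifies functoriality against the presentation of $\A$ in Lemma \ref{presentationofA}. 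Your approach is much shorter and needs essentially nothing beyond Pieri, but it imports Kim's equivalence of categories and the $\catLieC$ computation. The paper's approach is self-contained on the $\A$-side, produces the extension explicitly (feeding into the subsequent remark on its realization inside $\A(0,-)$ and $\A^{L}(L,H^{\otimes -})$), and illustrates concretely how the Casimir Hopf relations constrain degree-$1$ morphisms --- which is the announced purpose of that subsection.
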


\begin{proof}
Let $F$ be an extension of $T(S^{d}\circ\mathfrak{a}^{\#})$ by $T(S^{d'}\circ\mathfrak{a}^{\#})$.
It follows from Lemmas \ref{extgrop} and \ref{extensionAB} that 
$U(F)\cong S^{d}\circ\mathfrak{a}^{\#}\oplus S^{d'}\circ\mathfrak{a}^{\#}$ as $\kgrop$-modules and that $F$ is determined by the morphisms $F(f):S^{d}\circ\mathfrak{a}^{\#}(m)\to S^{d'}\circ\mathfrak{a}^{\#}(n)$ for $f\in \A_1(m,n)$.

Let $\{e_i\mid 1\le i\le d\}$ be a basis for $\Z^d$ and $\{x_i\mid 1\le i\le d\}$ the dual basis for $\Hom(\Z^d,\K)\cong \K^d$.
We take a generator 
$$x=x_1\cdots x_d\in S^{d}\circ\mathfrak{a}^{\#}(d)\cong \Sym^d(\Hom(\Z^d,\K))$$
of the simple $\kgrop$-module $S^{d}\circ\mathfrak{a}^{\#}$, and set 
$$F(\id_{d}\otimes \tilde{c})(x)=
\sum_{I=(i_1,\cdots,i_{d+2})\in \Lambda}a_I x_1^{i_1}\cdots x_{d+2}^{i_{d+2}} \in S^{d'}\circ\mathfrak{a}^{\#}(d+2),$$
where $a_I\in \K$ and 
$\Lambda=\{I=(i_1,\cdots,i_{d+2})\mid  \sum_{j=1}^{d+2}i_j=d', \; i_1,\cdots,i_{d+2}\ge 0\}$.
It follows from the relation \eqref{Casimir2-tensorDelta} of the Casimir Hopf algebra in $\A$ that
\begin{gather*}
\begin{split}
    F((\id_{d}\otimes \Delta\otimes \id_1)\circ (\id_{d}\otimes \tilde{c}))(x)
    &=F((\id_{d+1}\otimes \eta\otimes \id_1+\id_{d}\otimes \eta\otimes \id_{2})\circ (\id_{d}\otimes \tilde{c}))(x).
\end{split}
\end{gather*}
It follows that $a_I=0$ unless $i_{d+1}= 1$ since we have
\begin{gather*}
\begin{split}
F((\id_{d}\otimes \Delta\otimes \id_1)\circ (\id_{d}\otimes \tilde{c}))(x)
&=F(\id_{d}\otimes \Delta\otimes \id_1)(\sum_{I\in \Lambda}a_I x_1^{i_1}\cdots x_{d+2}^{i_{d+2}})\\
&=S^{d'}\circ\mathfrak{a}^{\#}(\id_{d}\otimes \Delta\otimes \id_1)(\sum_{I\in \Lambda}a_I x_1^{i_1}\cdots x_{d+2}^{i_{d+2}})\\
&=\sum_{I\in \Lambda}a_I x_1^{i_1}\cdots x_d^{i_d}(x_{d+1}+x_{d+2})^{i_{d+1}} x_{d+3}^{i_{d+2}},
\end{split}
\end{gather*}
and 
\begin{gather*}
\begin{split}
     &F((\id_{d+1}\otimes \eta\otimes \id_1+\id_{d}\otimes \eta\otimes \id_{2})\circ (\id_{d}\otimes \tilde{c}))(x)\\
     &= F(\id_{d+1}\otimes \eta\otimes \id_1+\id_{d}\otimes \eta\otimes \id_{2})(\sum_{I\in \Lambda}a_I x_1^{i_1}\cdots x_{d+2}^{i_{d+2}})\\
     &=S^{d'}\circ\mathfrak{a}^{\#}(\id_{d+1}\otimes \eta\otimes \id_1+\id_{d}\otimes \eta\otimes \id_{2})(\sum_{I\in \Lambda}a_I x_1^{i_1}\cdots x_{d+2}^{i_{d+2}})\\
     &=\sum_{I\in \Lambda}a_I x_1^{i_1} \cdots x_d^{i_d} (x_{d+1}^{i_{d+1}}+x_{d+2}^{i_{d+1}})x_{d+3}^{i_{d+2}}.
\end{split}
\end{gather*}
In a similar way, we have $a_I=0$ unless $i_{d+2}=1$ by the relation \eqref{Casimir2-tensorDelta'}.
For any $1\le j\le d$, we also have $a_I=0$ unless $i_j= 1$
since we have 
\begin{gather*}
    \begin{split}
    &F((\id_{j-1}\otimes \Delta\otimes \id_{d-j+2})\circ(\id_{d}\otimes \tilde{c}))(x)\\
    &=F(\id_{d+1}\otimes\tilde{c})F(\id_{j-1}\otimes \Delta\otimes \id_{d-j})(x)\\
    &=F(\id_{d+1}\otimes\tilde{c})F(\id_{j-1}\otimes \eta\otimes \id_{d-j+1}+\id_{j}\otimes \eta\otimes \id_{d-j})(x)\\
    &=F((\id_{j-1}\otimes \eta\otimes \id_{d-j+3}+\id_{j}\otimes \eta\otimes \id_{d-j+2})\circ (\id_{d}\otimes \tilde{c}))(x),
\end{split}
\end{gather*}
where the second identity follows from
\begin{gather*}
\begin{split}
     &F(\id_{d+1}\otimes\tilde{c})F(\id_{j-1}\otimes \Delta\otimes \id_{d-j})(x)\\
     &=F(\id_{d+1}\otimes\tilde{c})(S^{d}\circ\mathfrak{a}^{\#})(\id_{j-1}\otimes \Delta\otimes \id_{d-j})(x)\quad (\text{by }\eqref{F=Ag0})\\
     &=F(\id_{d+1}\otimes\tilde{c})(x_1\cdots x_{j-1} (x_{j}+x_{j+1}) x_{j+2}\cdots x_{d+1})\\
     &=F(\id_{d+1}\otimes\tilde{c})(S^{d}\circ\mathfrak{a}^{\#})(\id_{j-1}\otimes \eta\otimes \id_{d-j+1}+\id_{j}\otimes \eta\otimes \id_{d-j})(x)\\
     &=F(\id_{d+1}\otimes\tilde{c})F(\id_{j-1}\otimes \eta\otimes \id_{d-j+1}+\id_{j}\otimes \eta\otimes \id_{d-j})(x)\quad (\text{by }\eqref{F=Ag0}),
\end{split}
\end{gather*}
and since we have
\begin{gather*}
\begin{split}
    F((\id_{j-1}\otimes \Delta\otimes \id_{d-j+2})\circ(
    \id_{d}\otimes \tilde{c}))(x)
    &=F(\id_{j-1}\otimes \Delta\otimes \id_{d-j+2})(\sum_{I\in \Lambda}a_I x_1^{i_1}\cdots x_{d+2}^{i_{d+2}})\\
    &=\sum_{I\in \Lambda}a_I x_1^{i_1}\cdots x_{j-1}^{i_{j-1}} ( x_{j}+ x_{j+1})^{i_{j}}x_{j+2}^{i_{j+1}} \cdots x_{d+3}^{i_{d+2}}
\end{split}
\end{gather*}
and
\begin{gather*}
\begin{split}
   &F((\id_{j-1}\otimes \eta\otimes \id_{d-j+3}+\id_{j}\otimes \eta\otimes \id_{d-j+2})\circ (\id_{d}\otimes \tilde{c}))(x)\\
   &=F(\id_{j-1}\otimes \eta\otimes \id_{d-j+3}+\id_{j}\otimes \eta\otimes \id_{d-j+2})(\sum_{I\in \Lambda}a_I x_1^{i_1}\cdots x_{d+2}^{i_{d+2}})\\
   &=\sum_{I\in \Lambda}a_I x_1^{i_1}\cdots x_{j-1}^{i_{j-1}} ( x_{j}^{i_{j}}+ x_{j+1}^{i_{j}})x_{j+2}^{i_{j+1}} \cdots x_{d+3}^{i_{d+2}}.
\end{split}
\end{gather*}

If $d'<d+2$ (resp. $d'>d+2$), then for any $I\in \Lambda$, there exists some $j\in \{1,\cdots,d+2\}$ such that $i_j=0$ (resp. $i_j\ge 2$). 
Therefore, in both cases, we have $a_I=0$ for any $I\in \Lambda$, which implies that $F\cong T(S^{d}\circ\mathfrak{a}^{\#})\oplus T(S^{d'}\circ\mathfrak{a}^{\#})$ as $\A$-modules.
Hence, for $d'\neq d+2$, we have 
$$\Ext^1_{\AMod}(T(S^{d}\circ\mathfrak{a}^{\#}), T(S^{d'}\circ\mathfrak{a}^{\#}))=0.$$

If $d'=d+2$, then $a_I\neq 0$ requires $I=(1,\cdots,1)$, and thus we have 
\begin{gather*}
    F(\id_{d}\otimes\tilde{c})(x)= a x_1\cdots x_{d+2}
\end{gather*}
for some $a\in \K$.
For any morphism $f\in \A_1(m,n)$, there exists $g_0\in \A_0(m+2,n)$ such that $f=g_0\circ (\id_m\otimes \tilde{c})$ by Lemma \ref{decompositionofA}.
For any element $x'\in T(S^{d}\circ\mathfrak{a}^{\#})(m)$, there exists $h_0\in \A_0(d,m)$ such that $x'=T(S^{d}\circ\mathfrak{a}^{\#})(h_0)(x)$ since $x$ is a generator of $S^{d}\circ\mathfrak{a}^{\#}$. 
Therefore, by using \eqref{F=Ag0}, we obtain
\begin{gather*}
\begin{split}
    F(f)(x')&=F(g_0\circ (h_0\otimes \id_2))F(\id_d\otimes \tilde{c})(x)\\
    &=(S^{d+2}\circ\mathfrak{a}^{\#})(g_0\circ (h_0\otimes \id_2))(F(\id_d\otimes \tilde{c})(x)).
\end{split}
\end{gather*}
It follows that the functor $F$ is uniquely determined by $F(\id_{d}\otimes\tilde{c})(x)$, and we obtain 
$$\Ext^1_{\AMod}(T(S^{d}\circ\mathfrak{a}^{\#}), T(S^{d+2}\circ\mathfrak{a}^{\#}))\subset \K.$$

In order to prove that 
$\Ext^1_{\AMod}(T(S^{d}\circ\mathfrak{a}^{\#}), T(S^{d+2}\circ\mathfrak{a}^{\#}))\cong \K,$
we will construct a non-trivial extension $F$ of $T(S^{d}\circ\mathfrak{a}^{\#})$ by $T(S^{d+2}\circ\mathfrak{a}^{\#})$.
For $m\ge 0$, let $F(m)= S^{d}\circ\mathfrak{a}^{\#}(m)\oplus S^{d+2}\circ\mathfrak{a}^{\#}(m)$.
For a morphism $f_0\in \A_0(m,n)$, let $F(f_0)=S^{d}\circ\mathfrak{a}^{\#}(f_0)\oplus S^{d+2}\circ\mathfrak{a}^{\#}(f_0)$.
For a morphism $f\in \A_{\ge 2}(m,n)$, let $F(f)=0$.
For any morphism $f\in \A_1(m,n)$ and any elements $x'\in T(S^{d}\circ\mathfrak{a}^{\#})(m)$ and $y\in T(S^{d+2}\circ\mathfrak{a}^{\#})(m)$, 
let 
$$F(f)(x'+y)=F(f)(x')=(S^{d+2}\circ\mathfrak{a}^{\#})(g_0\circ (h_0\otimes \id_2))(x_1\cdots x_{d+2}),
$$
where the morphisms $g_0$ and $h_0$ are taken as in the last paragraph.
We will check that the map $F(f)$ for $f\in \A_1(m,n)$ is well defined.
For any $h_0,h_0'\in \A_0(d,m)$ such that $S^{d}\circ\mathfrak{a}^{\#}(h_0)(x)=S^{d}\circ\mathfrak{a}^{\#}(h_0')(x)$, 
we have 
\begin{gather*}
\begin{split}
    S^{d+2}\circ\mathfrak{a}^{\#}(h_0\otimes \id_2)(x_1\cdots x_{d+2})
    &=\mathfrak{a}^{\#}(h_0)(x_1)\cdots\mathfrak{a}^{\#}(h_0)(x_d)x_{d+1}x_{d+2}\\
    &=S^{d}\circ\mathfrak{a}^{\#}(h_0)(x)x_{d+1}x_{d+2}\\
    &=S^{d}\circ\mathfrak{a}^{\#}(h'_0)(x)x_{d+1}x_{d+2}\\
    &=S^{d+2}\circ\mathfrak{a}^{\#}(h_0'\otimes \id_2)(x_1\cdots x_{d+2}).
\end{split}
\end{gather*}
The assignment $F$ respects the relations \eqref{Casimir2-tensorDelta}, \eqref{Casimir2-tensorP} and \eqref{Casimir2-tensorAd} for Casimir $2$-tensors since we have
 \begin{gather*}
 \begin{split}
     F((\id_{d}\otimes P_{1,1})\circ (\id_d\otimes \tilde{c}))(x)
     &=S^{d+2}\circ\mathfrak{a}^{\#}(\id_{d}\otimes P_{1,1})(x_1\cdots x_{d+2})\\
     &=x_1 \cdots x_d x_{d+2}x_{d+1}\\
     &=F(\id_d\otimes \tilde{c})(x)
 \end{split}
 \end{gather*}
 and by the argument in the second paragraph of this proof, we have
 \begin{gather*}
    \begin{split}
      F((\id_{d}\otimes \Delta\otimes \id_1)\circ (\id_{d}\otimes \tilde{c}))(x)
    &=F((\id_{d+1}\otimes \eta\otimes \id_1+\id_{d}\otimes \eta\otimes \id_{2})\circ (\id_{d}\otimes \tilde{c}))(x)
    \end{split}
 \end{gather*}
and we have
\begin{gather*}
    \begin{split}
    F(\id_{d-1}\otimes ((ad\otimes ad)\circ(\id_1\otimes P_{1,1}\otimes \id_1)\circ (\Delta\otimes\tilde{c})))(x)
    =0=F(\id_{d-1}\otimes \tilde{c}\varepsilon)(x).
    \end{split}
\end{gather*}
Therefore, by the universality of the category $\A$ by Lemma \ref{presentationofA}, the assignment $F$ defines a $\K$-linear functor, which yields a non-trivial extension of $T(S^{d}\circ\mathfrak{a}^{\#})$ by $T(S^{d+2}\circ\mathfrak{a}^{\#})$.
This completes the proof. 
\end{proof}

\begin{remark}
A non-trivial extension of $T(S^{d}\circ\mathfrak{a}^{\#})$ by $T(S^{d+2}\circ\mathfrak{a}^{\#})$ can be realized as a sub-quotient of the $\A$-modules $\A(0,-)$ and $\A^{L}(L,H^{\otimes -})$, where $\A^{L}$ denotes the category of extended Jacobi diagrams in handlebodies, as follows.
For even $d=2d'$, a non-trivial extension can be realized as  
    \begin{gather*}
        \A_{\ge d'}(0,-)/(\A_{\ge d'+2}(0,-)+\A Q_{\ge d'}),
    \end{gather*}
where $\A_{\ge d}(0,-)$ is the $\A$-submodule of $\A(0,-)$ spanned by elements of degree $\ge d$, and where $\A Q$ is the $\A$-submodule of $\A_{\ge 2}(0,-)$ generated by the anti-symmetric element $Q_2=\tilde{c}^{\otimes 2}-P_{(23)}\circ \tilde{c}^{\otimes 2} \in \A_2(0,4)$ and $\A Q_{\ge d}=\A Q\cap \A_{\ge d}(0,-)$.
For odd $d=2d'+1$, a non-trivial extension can be realized as 
\begin{gather*}
      \A^{L}_{\ge d'}(L,H^{\otimes -})/(\A^{L}_{\ge d'+2}(L,H^{\otimes -})+\A^{L} Q_{\ge d'}),
\end{gather*}
where $\A^{L}_{\ge d'}(L,H^{\otimes -})$ is the $\A$-submodule of $\A^{L}(L,H^{\otimes -})$ spanned by elements of degree $\ge d$, and where $\A^{L} Q$ is the $\A$-submodule of $\A^{L}(L,H^{\otimes -})$ generated by 
$$i \otimes \tilde{c}- P_{(12)}\circ (i \otimes \tilde{c}),\quad i \otimes \tilde{c}^{\otimes 2}- P_{(34)}\circ (i \otimes \tilde{c}^{\otimes 2}).$$
See \cite[Section 6]{katadaAmod} for details.
\end{remark}

We compute the first Ext-groups between Schur functors and exterior power functors in $\AMod$.

\begin{theorem}\label{extLambda}
Let $\lambda$ be a partition and $d'\ge 0$.
Then the injective map
\begin{gather*}
     \iota:  \Ext^1_{\kgropMod}(S^{\lambda}\circ\mathfrak{a}^{\#}, \Lambda^{d'}\circ\mathfrak{a}^{\#})
     \hookrightarrow        
     \Ext^1_{\AMod}(T(S^{\lambda}\circ\mathfrak{a}^{\#}), T(\Lambda^{d'}\circ\mathfrak{a}^{\#}))
\end{gather*}
is an isomorphism.
Therefore, we have 
\begin{gather*}
    \Ext^1_{\AMod}(T(S^{\lambda}\circ\mathfrak{a}^{\#}), T(\Lambda^{d'}\circ\mathfrak{a}^{\#}))\cong \begin{cases}
        \K & \text{ if } \lambda=2^2 1^{d'-3}, 2 1^{d'-2}, 1^{d'+1},\\
        0 & \text{otherwise}.
    \end{cases}
\end{gather*}
\end{theorem}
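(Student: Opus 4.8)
The plan is to prove that the injective map $\iota$ is also surjective; then $\iota$ and $\pr$ are mutually inverse isomorphisms, and the displayed dimension formula drops out of Lemma~\ref{extgrop}. (The equality of dimensions is in fact already formal: by Theorem~\ref{Ext1Amod} and Lemma~\ref{extgrop} the two Ext-groups agree in dimension except possibly when $d'=|\lambda|+2$, where they differ by $LR^{1^{d'}}_{\lambda,2}$, and $LR^{1^{d'}}_{\lambda,2}=0$ because a single column admits no horizontal strip of size $2$; an injective map between equal-dimensional spaces is an isomorphism. I nonetheless describe the direct argument, in the spirit of this subsection and of Theorem~\ref{extsym}.)

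So let $F$ be an extension of $T(S^{\lambda}\circ\mathfrak{a}^{\#})$ by $T(\Lambda^{d'}\circ\mathfrak{a}^{\#})$ in $\AMod$; I must show $[F]=\iota(\pr[F])$. By Lemma~\ref{extensionAB}, $U(F)$ is an extension of $S^{\lambda}\circ\mathfrak{a}^{\#}$ by $\Lambda^{d'}\circ\mathfrak{a}^{\#}$ in $\kgropMod$, the morphisms of $\A_{\ge 2}$ act by $0$ on $F$, and $F$ is determined by $U(F)$ together with the maps $F(f)$ for $f\in\A_1$. Fix a generator $x$ of the simple $\kgrop$-module $S^{\lambda}\circ\mathfrak{a}^{\#}$, say lying in degree $p$, and identify $\Lambda^{d'}\circ\mathfrak{a}^{\#}(p+2)$ with $\Lambda^{d'}(\K^{p+2})$, variables $x_1,\dots,x_{p+2}$. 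Since $x$ generates, \eqref{F=Ag0} and Lemma~\ref{decompositionofA} reduce everything to the single element $\phi:=F(\id_p\otimes\tilde{c})(x)\in\Lambda^{d'}(\K^{p+2})$: once $\phi=0$ is known, \eqref{F=Ag0} gives $F(\id_m\otimes\tilde{c})=0$ for all $m$, hence $F(f)=0$ for every $f\in\A_{\ge 1}$, hence $F=T(U(F))$ as $\A$-modules, and since $T$ is fully faithful this gives $[F]=\iota(\pr[F])$.

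It remains to show $\phi=0$, and here I would carry out, in the exterior algebra, the analogue of the computation in the proof of Theorem~\ref{extsym}. Apply $F$ to the relations~\eqref{Casimir2-tensorDelta} and~\eqref{Casimir2-tensorDelta'} for the Casimir $2$-tensor, each tensored on the left by $\id_p$, and evaluate at $x$, computing the left-hand sides through the $\kgrop$-action on $\Lambda^{d'}\circ\mathfrak{a}^{\#}$ and~\eqref{F=Ag0}. Expanding $\phi=\sum_S a_S\,x_S$ in the monomial basis $x_S=\bigwedge_{j\in S}x_j$ and comparing coefficients, relation~\eqref{Casimir2-tensorDelta} forces $a_S=0$ whenever $S\cap\{p+1,p+2\}$ is $\emptyset$ or $\{p+2\}$, and, symmetrically, \eqref{Casimir2-tensorDelta'} forces $a_S=0$ whenever $S\cap\{p+1,p+2\}$ is $\emptyset$ or $\{p+1\}$; together these say that every monomial of $\phi$ contains both $x_{p+1}$ and $x_{p+2}$, i.e.\ $\phi=\omega\wedge x_{p+1}\wedge x_{p+2}$ for some $\omega\in\Lambda^{d'-2}(\K^{p})$. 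Finally apply $F$ to~\eqref{Casimir2-tensorP}, $P_{1,1}\tilde{c}=\tilde{c}$, tensored with $\id_p$: this forces $\phi$ to be invariant under the $\kgrop$-morphism transposing the two slots $p+1$ and $p+2$, which sends $\omega\wedge x_{p+1}\wedge x_{p+2}$ to $-\phi$; hence $\phi=-\phi$, so $\phi=0$ as $\K$ has characteristic $0$. This is exactly where the exterior case parts ways with the symmetric case of Theorem~\ref{extsym}: there the analogous element $x_{d+1}x_{d+2}$ is symmetric in the two new slots and survives, whereas $x_{p+1}\wedge x_{p+2}$ is antisymmetric and is killed.

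Granting that $\iota$ is an isomorphism, the explicit answer follows from Lemma~\ref{extgrop} with $\mu=1^{d'}$: the coefficient $LR^{1^{d'}}_{\rho,1}$ vanishes unless $\rho=1^{d'-1}$, so the dimension equals $LR^{\lambda}_{1^{d'-1},1^{2}}$ when $|\lambda|=d'+1$ and is $0$ otherwise, and by the Pieri rule $LR^{\lambda}_{1^{d'-1},1^{2}}=1$ exactly when $\lambda/1^{d'-1}$ is a vertical strip of two boxes, which is the case precisely for the partitions in the statement. I expect the main obstacle to be the preceding paragraph: pushing the coefficient comparison for~\eqref{Casimir2-tensorDelta}–\eqref{Casimir2-tensorDelta'} through $\Lambda^{d'}$ with enough care — tracking the slot relabellings produced by the unit $\eta$ and the signs of the exterior algebra — to be certain one obtains genuine divisibility of $\phi$ by $x_{p+1}\wedge x_{p+2}$ rather than only a weaker symmetry property.
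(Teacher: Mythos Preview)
Your proposal is correct, and the direct argument you outline is sound: the coefficient comparison for \eqref{Casimir2-tensorDelta} and \eqref{Casimir2-tensorDelta'} really does force every surviving monomial of $\phi$ to contain both $x_{p+1}$ and $x_{p+2}$ (the case $p+1\notin S$ gives $a_S=2a_S$ on the nose, with no interference from other monomials), and then \eqref{Casimir2-tensorP} kills $\phi$ by antisymmetry. Your stated worry about signs and slot relabellings is not an actual obstacle.

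The paper's proof reaches the same conclusion $\phi=0$ by a different route. It fixes the generator specifically in degree $l=l(\lambda)$ and splits into three cases. For $d'>l+2$ the target space vanishes. For $d'=l+2$ it applies \eqref{Casimir2-tensorP} directly to the single monomial $x_1\wedge\cdots\wedge x_{l+2}$, exactly as you do at the end. For $d'<l+2$, rather than using the comultiplication relations, it applies the counit: for each $j$ it computes $F(\id_{j-1}\otimes\varepsilon\otimes\id_{l+2-j})F(\id_l\otimes\tilde c)(x_\lambda)$, which on one hand extracts the sum of monomials with $i_j=0$, and on the other hand vanishes, either by \eqref{epsilonc} (for $j=l+1,l+2$) or because $(S^\lambda\circ\mathfrak a^{\#})(\id_{j-1}\otimes\varepsilon\otimes\id_{l-j})(x_\lambda)=0$ (for $j\le l$), this last point using crucially that $S^\lambda\circ\mathfrak a^{\#}(l-1)=0$. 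So the paper exploits the minimality of the chosen degree $l=l(\lambda)$, whereas your argument via \eqref{Casimir2-tensorDelta}, \eqref{Casimir2-tensorDelta'} is case-free and works for a generator placed in any degree. Your preliminary remark deducing the result formally from Theorem~\ref{Ext1Amod} via $LR^{1^{d'}}_{\lambda,2}=0$ is also correct, though of course the purpose of this subsection is precisely to bypass that theorem.
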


\begin{proof}
Let $F$ be an extension of $T(S^{\lambda}\circ\mathfrak{a}^{\#})$ by $T(\Lambda^{d'}\circ\mathfrak{a}^{\#})$.
Set $l=l(\lambda)$ and let $x_{\lambda}\in S^{\lambda}\circ\mathfrak{a}^{\#}(l)$ be a generator of the simple $\kgrop$-module $S^{\lambda}\circ\mathfrak{a}^{\#}$.

Let $\{e_i\mid 1\le i\le l+2\}$ be a basis for $\Z^{l+2}$ and $\{x_i\mid 1\le i\le l+2\}$ the dual basis for $\mathfrak{a}^{\#}(l+2)=\Hom(\Z^{l+2},\K)\cong \K^{l+2}$.
Set
\begin{gather*}
    F(\id_{l}\otimes \tilde{c})(x_{\lambda})
    =\sum_{I=(i_1,\cdots,i_{l+2})\in \Lambda} a_I x_1^{i_1}\wedge \cdots\wedge x_{l+2}^{i_{l+2}}\in \Lambda^{d'}\circ\mathfrak{a}^{\#}(l+2),
\end{gather*}
where $a_I\in \K$ and $\Lambda=\{I=(i_1,\cdots,i_{l+2})\mid \sum_{j=1}^{l+2}i_j=d', i_1,\cdots,i_{l+2}\in \{0,1\}\}$.
In what follows, we will prove that $F(\id_l\otimes \tilde{c})(x_{\lambda})=0$.
If $d'>l+2$, then we have $\Lambda^{d'}\circ\mathfrak{a}^{\#}(l+2)=0$ and thus $F(\id_l\otimes \tilde{c})(x_{\lambda})=0$.
If $d'=l+2$, then we have 
$$F(\id_l\otimes \tilde{c})(x_{\lambda})=a_{(1,\cdots,1)} x_1\wedge \cdots\wedge x_{l+2}\in \Lambda^{l+2}\circ\mathfrak{a}^{\#}(l+2).$$
It follows from the relation \eqref{Casimir2-tensorP} that we have
\begin{gather*}
    \begin{split}
         F(\id_l\otimes \tilde{c})(x_{\lambda})
         &=F((\id_{l}\otimes P_{1,1})\circ (\id_l\otimes \tilde{c}))(x_{\lambda})\\
         &=F(\id_{l}\otimes P_{1,1})(a_{(1,\cdots,1)} x_1\wedge   \cdots\wedge x_{l+1}\wedge x_{l+2})\\
         &=\Lambda^{l+2}\circ\mathfrak{a}^{\#}(\id_l\otimes P_{1,1})(a_{(1,\cdots,1)} x_1\wedge \cdots\wedge x_{l+1}\wedge x_{l+2})\\
         &=a_{(1,\cdots,1)} x_1\wedge \cdots \wedge x_{l+2}\wedge x_{l+1}\\
         &=- a_{(1,\cdots,1)} x_1\wedge \cdots\wedge x_{l+1}\wedge x_{l+2}.
    \end{split}
\end{gather*}
Therefore, we have $a_{(1,\cdots,1)}=0$ and $F(\id_l\otimes \tilde{c})(x_{\lambda})=0$.
If $d'<l+2$, then for any $I\in \Lambda$, there exists some $j\in \{1,\cdots,l+2\}$ such that $i_j=0$.
For each $j\in \{1,\cdots,l+2\}$, we have 
\begin{gather*}
\begin{split}
    &F(\id_{j-1}\otimes \varepsilon \otimes \id_{l+2-j})F(\id_l\otimes \tilde{c})(x_{\lambda})\\
    &=F(\id_{j-1}\otimes \varepsilon \otimes \id_{d+2-j})(\sum_{I\in \Lambda} a_{I} x_1^{i_1}\wedge \cdots\wedge x_{l+2}^{i_{l+2}})\\
    &=\sum_{I\in \Lambda, \text{ s.t., } i_j=0} a_{I} x_1^{i_1}\wedge\cdots\wedge x_{j-1}^{i_{j-1}}\wedge x_{j}^{i_{j+1}} \wedge \cdots\wedge x_{l+1}^{i_{l+2}}.
\end{split}
\end{gather*}
On the other hand, we have 
$$F(\id_{j-1}\otimes \varepsilon \otimes \id_{l+2-j})F(\id_l\otimes \tilde{c})(x_{\lambda})=0$$
for $j=l+1,l+2$ by the relation \eqref{epsilonc} and for $j\in \{1,\cdots,l\}$ by 
\begin{gather*}
    \begin{split}
    &F(\id_{j-1}\otimes \varepsilon \otimes \id_{l+2-j})F(\id_l\otimes \tilde{c})(x_{\lambda})\\
    &=F(\id_{l-1}\otimes \tilde{c})F(\id_{j-1}\otimes \varepsilon \otimes \id_{l-j})(x_{\lambda})\\
    &=F(\id_{l-1}\otimes \tilde{c})((S^{\lambda}\circ\mathfrak{a}^{\#})(\id_{j-1}\otimes \varepsilon \otimes \id_{l-j})(x_{\lambda}))\quad (\text{by }\eqref{F=Ag0})\\
    &=0,
    \end{split}
\end{gather*}
where the last identity follows from the $\kgrop$-module structure of $S^{\lambda}\circ\mathfrak{a}^{\#}$.
Therefore, we have $a_I=0$ for any $I$ with $i_j=0$ for some $j$.
Hence, we have $F(\id_l\otimes \tilde{c})(x_{\lambda})=0$.

We will prove that for any morphism $f\in \A_1(m,n)$ and any element $x\in T(S^{\lambda}\circ\mathfrak{a}^{\#})(m)$, we have $F(f)(x)=0$.
Since $x_{\lambda}$ is a generator of $S^{\lambda}\circ\mathfrak{a}^{\#}$, there exists $h_0\in \A_0(l(\lambda),m)$ such that $x=T(S^{\lambda}\circ\mathfrak{a}^{\#})(h_0)(x_{\lambda})$.
There exists $g_0\in \A_0(m+2,n)$ such that $f=g_0\circ (\id_m\otimes \tilde{c})$ by Lemma \ref{decompositionofA}.
Then we have
\begin{gather*}
\begin{split}
    F(f)(x)&=F(f)(T(S^{\lambda}\circ\mathfrak{a}^{\#})(h_0)(x_{\lambda}))\\
    &=F(f\circ h_0)(x_{\lambda})\quad (\text{by } \eqref{F=Ag0})\\
    &=F(g_0 \circ (\id_{m}\otimes \tilde{c})\circ h_0)(x_{\lambda})\\
    &=F(g_0\circ (h_0\otimes \id_2)\circ (\id_{l(\lambda)}\otimes \tilde{c}))(x_{\lambda})\\
    &=F(g_0\circ (h_0\otimes \id_2))F(\id_{l(\lambda)}\otimes \tilde{c})(x_{\lambda})=0.
\end{split}
\end{gather*}
Therefore, by Lemma \ref{extensionAB}, we have $TU(F)\cong F$, which implies that the injective map $\iota$ is an isomorphism.
The last statement follows from Lemma \ref{extgrop}.
\end{proof}

\bibliographystyle{plain}
\bibliography{reference}

\end{document}